\documentclass[11pt]{amsart}
\usepackage{amssymb, amsmath, amsfonts, amsthm, esint, graphicx, xcolor}
\usepackage[abbrev]{amsrefs}
\usepackage[margin=0.9in]{geometry}
\usepackage[colorlinks=true, linkcolor=blue, citecolor=blue]{hyperref}
\usepackage{tikz}
\usetikzlibrary{arrows.meta, positioning, calc, decorations.pathreplacing, shapes.geometric}

\DeclareMathOperator{\Ric}{Ric}
\DeclareMathOperator{\Hess}{Hess}
\DeclareMathOperator{\diam}{diam}
\DeclareMathOperator{\vol}{vol}

\DeclareMathOperator{\intt}{int}

\newtheorem{theorem}{Theorem}[section]
\newtheorem{definition}[theorem]{Definition}
\newtheorem{lemma}[theorem]{Lemma}
\newtheorem{proposition}[theorem]{Proposition}
\newtheorem{corollary}[theorem]{Corollary}
\newtheorem{remark}[theorem]{Remark}

\newtheorem{thmx}{Theorem}

\title[Spectral Gap Estimates on Nearly Einstein Manifolds ]{Spectral Gap Estimates on Nearly Einstein Manifolds with Integral Curvature Deviation}

\author{Shunzi Guo}
\address{School of Mathematics, Yunnan Normal University, Kunming 650500, P. R. China}
\email{guoshunzi@yeah.net}
\thanks{The author is supported by the National Natural Science Foundation of China (Grant No.~12261105).}

\subjclass[2020]{58C40, 35P15, 53C21, 53C25}
\keywords{Spectral gap, nearly Einstein manifolds, integral curvature bounds, Bakry--Qian estimate}

\date{}

\begin{document}

\begin{abstract}
Let $(M^n,g)$ be a compact Riemannian manifold of dimension $n\ge 2$,
possibly with nonempty convex $C^2$ boundary. We study the first nonzero
eigenvalue $\lambda_1(M)$ of the Laplace--Beltrami operator, subject to the
Neumann boundary condition when $\partial M\neq\emptyset$. Given a reference
scalar curvature $R_E>0$, set $K_E:=R_E/(n(n-1))$, and define the traceless
Ricci tensor $\mathring{\Ric}:=\Ric-\frac{R}{n}g$ and the Einstein deviation
\[
\Theta:=|\mathring{\Ric}|+\frac{|R-R_E|}{n},\qquad
\kappa_{\mathrm{Ein}}(p):=\left(\frac{1}{\vol(M)}\int_M\Theta^p\,dV\right)^{1/p},
\quad p>n/2.
\]
We say that $M$ is $\varepsilon$-nearly Einstein if
$\kappa_{\mathrm{Ein}}(p)<\varepsilon$. The main result states that for every
$\alpha\in(0,1)$ there exists
$\varepsilon_0=\varepsilon_0(n,p,\alpha,D,R_E)>0$ such that if
$\diam(M)\le D$ and $\kappa_{\mathrm{Ein}}(p)<\varepsilon_0$, then
\[
\lambda_1(M)\ge \alpha\,\lambda_1(n,K_E,D),
\]
where $\lambda_1(n,K,D)$ is the first nonzero eigenvalue of the Bakry--Qian
one-dimensional model problem with curvature parameter $K$, with the convention
$\lambda_1(n,K,D)=nK$ when $K>0$ and $D\ge\pi/\sqrt{K}$. The proof combines the
Bochner formula for the decomposition
$\Ric=(n-1)K_Eg+\theta g+\mathring{\Ric}$ with an auxiliary function $J$
satisfying
\[
\Delta J-\tau\frac{|\nabla J|^2}{J}-2J\Theta=-\sigma J
\]
and Neumann boundary conditions, together with Moser iteration and a maximum
principle. As $\kappa_{\mathrm{Ein}}(p)\to0$, the estimate recovers the
classical Bakry--Qian theorem.
\end{abstract}
\maketitle
\vspace{-0.4cm}

\section{Introduction}\label{sec:intro}

\subsection{Background and motivation}

Let $(M^n, g)$ be an $n$-dimensional compact Riemannian manifold, where $n \ge 2$ denotes the dimension, $g$ the Riemannian metric, and $\nabla$ the Levi--Civita connection associated with $g$. We assume that $M$ has possibly nonempty boundary $\partial M$, which is required to be convex and of class $C^2$ whenever it is nonempty; $\nu$ denotes the outward unit normal vector field along $\partial M$. We consider the eigenvalue problem of the Laplacian on $M$,
\begin{equation}\label{eq:eigen}
\Delta u + \lambda u = 0 \quad \text{in } M,
\end{equation}
imposing Neumann boundary conditions (vanishing of the normal derivative) in the case of nonempty boundary. Here $\Delta$ is the Laplace--Beltrami operator, $u$ is a smooth real-valued eigenfunction, and $\lambda$ is the corresponding eigenvalue. By the spectral theory of compact self-adjoint operators, the eigenvalues form a discrete sequence
\[
\lambda_0 = 0 < \lambda_1 \le \lambda_2 \le \cdots \to +\infty.
\]
The eigenvalue $\lambda_0 = 0$ corresponds to constant eigenfunctions, and $\lambda_1$ denotes the first nonzero eigenvalue, which is the quantity of interest in this paper. The study of $\lambda_1$ occupies a central place in geometric analysis, and the problem of obtaining quantitative lower bounds in terms of geometric data has attracted sustained attention over several decades.

The classical result in this direction is due to Lichnerowicz \cite{lichnerowicz1958geometrie}, who showed that for compact manifolds with a positive Ricci curvature lower bound $\Ric \ge (n-1)K > 0$, where $\Ric$ denotes the Ricci tensor and $K$ a real constant, the first nonzero eigenvalue satisfies $\lambda_1(M) \ge nK$, with equality if and only if $M$ is isometric to the round sphere of radius $1/\sqrt{K}$. In the complementary regime of nonnegative Ricci curvature, Zhong and Yang \cite{Zhong1984ONTE} proved that $\lambda_1(M) \ge \pi^2/D^2$, where $D := \diam(M)$ denotes the diameter of $M$. These two estimates reflect somewhat different geometric mechanisms: the Lichnerowicz bound is governed by the local curvature positivity, whereas the Zhong--Yang bound is governed by the global diameter constraint.

Figure~\ref{fig:setting} illustrates the geometric setting. The manifold $M$ is a compact Riemannian manifold with a nonempty convex boundary $\partial M$; the first nontrivial Neumann eigenfunction $u$ attains its maximum $u^* := \max_M u$ in the interior and its minimum $-1$ on the boundary. The eigenvalue $\lambda_1$ is determined by the interplay between the Ricci curvature lower bound and the diameter.

\begin{figure}[ht]
\centering
\begin{tikzpicture}[scale=1.0]
  \draw[thick, blue!60!black, fill=blue!5] plot[smooth cycle, tension=0.8] coordinates {(-2.2,0.2) (-1.6,1.3) (0,1.6) (1.7,1.1) (2.3,0.1) (1.5,-1.2) (0,-1.5) (-1.5,-1.1)};
  \node[blue!60!black, font=\large] at (-1.5,1.0) {$M$};
  \draw[thick, red!70!black] plot[smooth cycle, tension=0.8] coordinates {(-2.2,0.2) (-1.6,1.3) (0,1.6) (1.7,1.1) (2.3,0.1) (1.5,-1.2) (0,-1.5) (-1.5,-1.1)};
  \node[red!70!black, font=\small] at (2.7,-0.5) {$\partial M$};
  \draw[gray, densely dashed] (0.1,0) ellipse (0.7 and 0.55);
  \draw[gray, densely dashed] (0.1,0) ellipse (1.2 and 0.9);
  \draw[gray, densely dashed] (0.1,0) ellipse (1.7 and 1.25);
  \fill[red] (0.1,0) circle (2.5pt);
  \node[red, below right, font=\small] at (0.15,-0.05) {$u^*$};
  \fill[red] (-1.8,-0.9) circle (2.0pt);
  \fill[red] (1.3,-1.1) circle (2.0pt);
  \node[red, below, font=\small] at (-1.8,-0.95) {$-1$};
  \node[red, below, font=\small] at (1.3,-1.15) {$-1$};
  \draw[<->, thick, gray] (-1.8,-1.3) -- (1.3,-1.3);
  \node[gray, below, font=\small] at (-0.25,-1.4) {$D = \diam(M)$};
  \node[font=\scriptsize, gray, text width=4cm, align=center] at (4.0,0.5) {Neumann condition\\$\partial_\nu u|_{\partial M} = 0$};
  \node[font=\scriptsize, gray, text width=4cm, align=center] at (4.0,-0.7) {Eigenvalue problem\\$\Delta u + \lambda_1 u = 0$};
\end{tikzpicture}
\caption{The geometric setting. The manifold $M$ has a nonempty convex boundary $\partial M$; the first nontrivial Neumann eigenfunction $u$ attains its maximum $u^* := \max_M u$ in the interior and its minimum $-1$ on the boundary. The diameter $D := \diam(M)$ and the Ricci curvature bound together control the first nonzero eigenvalue $\lambda_1$.}
\label{fig:setting}
\end{figure}

Bakry and Qian \cite{bakry2000some} (see also Kr\"oger \cite{kroger1992spectral}) succeeded in unifying these two estimates within a single framework, by introducing a one-dimensional model problem whose first nonzero eigenvalue depends simultaneously on the dimension, the curvature lower bound, and the diameter. Their result may be stated as follows.

\begin{thmx}[Bakry--Qian \cite{bakry2000some}]\label{thm:bakryqian}
Let $M$ be an $n$-dimensional compact Riemannian manifold, possibly with nonempty convex and $C^2$ boundary, satisfying $\Ric \ge (n-1)K$ for some $K \in \mathbb{R}$ and $\diam(M) \le D$, where $D > 0$ is a prescribed upper bound for the diameter. Then
\[
\lambda_1(M) \ge \lambda_1(n, K, D),
\]
where $\lambda_1(n, K, D)$ is the first nonzero eigenvalue of the one-dimensional problem
\begin{equation}\label{eq:model}
\begin{cases}
w'' - T_{n,K} w' + \lambda w = 0 & \text{on } \left[-\frac{D}{2}, \frac{D}{2}\right], \\[4pt]
w'\left(\pm\frac{D}{2}\right) = 0,
\end{cases}
\end{equation}
with the weight function
\[
T_{n,K}(x) = \begin{cases}
(n-1)\sqrt{K}\tan(\sqrt{K}x), & K > 0, \\[2pt]
0, & K = 0, \\[2pt]
-(n-1)\sqrt{-K}\tanh(\sqrt{-K}x), & K < 0.
\end{cases}
\]
Here $w$ is the eigenfunction of the model problem, and $\lambda$ its eigenvalue.
\end{thmx}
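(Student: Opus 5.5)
We will follow the gradient comparison method of Kr\"oger and Bakry--Qian, in the form used by Chen--Wang and Andrews--Clutterbuck. Let $u$ be a first Neumann eigenfunction with $\lambda=\lambda_1(M)$, normalized so that $\min u=-1$ and $\max u=u^*\le 1$; this normalization is possible after replacing $u$ by $-u$. On the model side, let $w$ solve the ODE in \eqref{eq:model} on an interval $[a,b]$, where the interval is chosen so that $w$ is increasing and $w(a)=-1$, $w'(a)=0$. Using the weight $T_{n,K}$ produces a model eigenvalue for the same $\lambda$. The key claim is the gradient comparison
\[
|\nabla u|(x)\le w'\bigl(w^{-1}(u(x))\bigr)\quad\text{for all }x\in M,
\]
whenever $\lambda>\lambda_1(n,K,D)$ is assumed false, i.e.\ for the appropriate choice of the parameters.

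\textbf{Step 1: proving the comparison by the maximum principle.} Set
\[
Z=\tfrac12|\nabla u|^2-\tfrac12\,(w')^2\circ w^{-1}(u)
\]
(after slightly perturbing $w$ to make the inequality strict), and suppose $Z$ attains a positive maximum at $x_0$. Convexity of $\partial M$ together with $\partial_\nu u=0$ gives $\partial_\nu|\nabla u|^2=-2\,\mathrm{II}(\nabla u,\nabla u)\le0$, so a boundary maximum can be handled by Hopf's lemma, and we may take $x_0$ interior. At $x_0$ we have $\nabla Z=0$, which forces $\nabla u$ to be an eigenvector of $\Hess u$ with eigenvalue $\phi'(u)$, where $\phi=\tfrac12(w')^2\circ w^{-1}$. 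We then apply Bochner's formula with $\Ric\ge(n-1)K$, and use the refined Kato/Cauchy--Schwarz inequality
\[
|\Hess u|^2\ge u_{11}^2+\frac{(\Delta u-u_{11})^2}{n-1}.
\]
Together with the ODE satisfied by $w$ (which is exactly the equality case of these inequalities, with $T_{n,K}$ arising from $(n-1)$ times the model mean curvature), this yields $\Delta Z>0$ at $x_0$, contradicting maximality. We expect this to be the main obstacle: getting the signs and the precise algebraic identity right, so that the one-dimensional model is the equality case. That identity holds because $T_{n,K}'=(n-1)K+T_{n,K}^2/(n-1)$.

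\textbf{Step 2: converting the comparison into an eigenvalue bound.} Take $q$ with $u(q)=-1$ and $p$ with $u(p)=u^*$, and join them by a minimizing geodesic $\gamma$, of length at most $D$; convexity of $\partial M$ guarantees such a geodesic exists in $M$. Integrating $|\nabla u|/w'(w^{-1}(u))\le1$ along $\gamma$ gives
\[
w^{-1}(u^*)-w^{-1}(-1)\le D.
\]
We then need the model comparison: $\max w$ on the half-period interval is at least $1$, and by a symmetry/comparison argument for the model ODE, $u^*$ is bounded below by the corresponding model maximum. Consequently, the half-oscillation length of the model solution at eigenvalue $\lambda$ is at most $D$. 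Since the half-oscillation length decreases in $\lambda$, and the symmetric interval $[-D/2,D/2]$ realizes exactly $\lambda_1(n,K,D)$, this gives $\lambda\ge\lambda_1(n,K,D)$. For $K>0$ with $D\ge\pi/\sqrt K$, the same argument combined with Lichnerowicz gives $nK$.

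The delicate secondary point is the comparison of maxima, i.e.\ showing that an asymmetric model interval does not yield a smaller eigenvalue than the symmetric one. We would handle this as Bakry--Qian do, by also comparing with the reflected model solution started from the other endpoint and using monotonicity of the model eigenvalue with respect to shifting the interval.
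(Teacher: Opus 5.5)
The paper only quotes this theorem from Bakry--Qian, but Sections~\ref{sec:gradient}--\ref{sec:proof} reproduce their argument in perturbed form. Measured against that argument, your Step~2 has a genuine gap: the maximum matching.

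Integrating the gradient bound along $\gamma$ gives $w^{-1}(u^*)-a\le D$. This equals the half-oscillation length $b-a$ only if $\max w=u^*$ \emph{exactly}. If $\max w>u^*$, you get $w^{-1}(u^*)<b$ and nothing sharp. If $\max w<u^*$, the comparison is not even defined. Since you can normalize only to $u^*\le 1$, not $u^*=1$, you need three things:
\begin{enumerate}
\item a family of models whose maxima sweep out an interval $[m,1]$;
\item the \emph{maximum comparison} $u^*\ge m$;
\item continuity of $\max w$ in the starting point, to match $u^*$ exactly.
\end{enumerate}
Translates of $T_{n,K}$ do not give such a family when $K\le 0$. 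For $K=0$ every model $-\cos(\sqrt\lambda(t-a))$ has maximum $1$. So you must use the other solutions of $T'=T^2/(n-1)+(n-1)K$: $-(n-1)/(t-c)$ for $K=0$, and the $\coth$-type and constant solutions for $K<0$, where the case $\lambda\le (n-1)^2|K|/4$ needs separate treatment. Your Step~1 computation uses only the Riccati identity, so it extends to any such $T$. It should, however, be stated under the range condition $[-1,u^*]\subset[-1,\max w]$ (as in Theorem~\ref{thm:gradient}), not conditionally on any assumption about $\lambda$.

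More importantly, $u^*\ge m$ is not a symmetry argument for the ODE; it is an estimate on $M$. The argument runs as follows:
\begin{itemize}
\item assume $u^*<m$ and apply the gradient comparison to the singular-start model;
\item derive a monotone quantity for the distribution of $u$ (Proposition~\ref{prop:monotone});
\item bound $\vol\{u\le -1+\epsilon\}$ by the model measure near the singular endpoint;
\item contradict the volume lower bound for the ball that the gradient bound places inside that sublevel set (Lemma~\ref{lem:ball}).
\end{itemize}
This needs some slack in the exponent, which the paper obtains from $\overline{n}q>n$.

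Only after matching does the one-dimensional comparison enter. For every Riccati solution $T$ and every interval $[a,b]$ of length $d$, $\lambda_1(T,[a,b])\ge\lambda_1(T_{n,K},[-d/2,d/2])$ (Bakry--Qian's Theorem~13, Proposition~\ref{prop:central} here). Domain monotonicity then finishes the proof. You conflate this step with the comparison of maxima. Your shifting argument, even if carried out, covers only translates of $T_{n,K}$. That suffices for $K>0$, where translates are all the Riccati solutions, but not for the non-translate solutions that the matching forces on you when $K\le 0$.

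A smaller point in Step~1: arguing at a \emph{positive} maximum of $Z$ does not close. The coefficient of $2Z$ in your lower bound for $\Delta Z$ is
$(n-1)K-\lambda-\phi''(u)=-\frac{n}{n-1}T^2+\lambda T\,w/w'$, which has no sign. The standard remedy is to replace $w$ by $cw$ and choose $c>1$ with $\max Z_c=0$; there the model ODE gives exact cancellation, and strictness comes from a slight perturbation of the parameters.

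If you want to avoid matching altogether, the Andrews--Clutterbuck two-point maximum principle compares directly with the symmetric model. Your one-point argument does not do this, so it cannot skip these steps.
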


We must be careful about the domain of definition of the model problem \eqref{eq:model}. When $K > 0$, the weight $T_{n,K}$ diverges at $x = \pm\pi/(2\sqrt{K})$, so that the model problem is well-defined only when $D < \pi/\sqrt{K}$. In the complementary regime $K > 0$ and $D \ge \pi/\sqrt{K}$, the diameter constraint is inactive: the sharp lower bound is $\lambda_1(M) \ge nK$, attained by the round sphere $S^n(1/\sqrt{K})$, and accordingly we adopt the convention
\begin{equation}\label{eq:lambda-segment}
\lambda_1(n, K, D) := nK \quad \text{when } K > 0 \text{ and } D \ge \frac{\pi}{\sqrt{K}}.
\end{equation}
This piecewise convention is standard in the Bakry--Qian theory \cite{bakry2000some} and ensures that the model eigenvalue is well-defined for all parameter values.

We emphasize that in Theorem \ref{thm:bakryqian} the quantity $K$ is a \emph{given} constant, prescribed by the pointwise Ricci curvature hypothesis $\Ric \ge (n-1)K$: it is the normalized lower bound of the Ricci tensor and, geometrically, it plays the role of a comparison curvature. It appears in two places in the statement: in the hypothesis, through the condition $\Ric \ge (n-1)K$, and in the conclusion, through the weight function $T_{n,K}$ of the one-dimensional model problem \eqref{eq:model}. The three cases $K>0$, $K=0$, and $K<0$ correspond, respectively, to the positively curved, flat, and negatively curved regimes, and the qualitative behaviour of the model eigenfunction $w$ differs accordingly: in the positive case, $T_{n,K}$ diverges at the endpoints of the symmetric interval and the eigenfunction is concentrated near the centre, whereas in the nonpositive case the weight remains bounded and the profile is flatter. In the present paper, we shall be concerned with the nearly Einstein setting, in which the natural candidate for $K$ is the curvature constant $K_E := R_E/(n(n-1))$ of the reference Einstein manifold; this will be made precise in \S\ref{sec:intro-nearly-Einstein}.

Figure~\ref{fig:model} illustrates the one-dimensional model problem and its first nontrivial Neumann eigenfunction. The weight function $T_{n,K}$ encodes the influence of the curvature lower bound: for $K>0$ it is unbounded at the endpoints of the symmetric interval, reflecting the focusing effect of positive curvature, whereas for $K \le 0$ it remains bounded, and the eigenfunction is correspondingly less concentrated near the centre.

\begin{figure}[ht]
\centering
\begin{tikzpicture}[scale=0.95]
  \draw[->, thick] (-3.4,0) -- (3.4,0) node[right] {\small $x$};
  \draw[->, thick] (0,-1.7) -- (0,1.9) node[above] {\small $w$};
  \draw[dashed, gray] (-2.5,-1.6) -- (-2.5,1.8);
  \draw[dashed, gray] (2.5,-1.6) -- (2.5,1.8);
  \node[below, font=\small] at (-2.5,-1.6) {$-D/2$};
  \node[below, font=\small] at (2.5,-1.6) {$D/2$};
  \draw[thick, blue, domain=-2.5:2.5, samples=120] plot (\x, {0.85*cos(180*\x/2.5)});
  \node[blue, font=\small] at (2.9,0.85) {$w$};
  \draw[gray, densely dotted] (-2.5,0.85) -- (-1.6,0.85);
  \draw[gray, densely dotted] (2.5,0.85) -- (1.6,0.85);
  \node[gray, font=\scriptsize, above] at (-2.0,0.85) {slope $0$};
  \node[gray, font=\scriptsize, above] at (2.0,0.85) {slope $0$};
  \fill[red] (0,0.85) circle (2.2pt);
  \node[red, right, font=\small] at (0.1,0.85) {$u^*$};
  \fill[red] (-2.5,-0.85) circle (2.2pt);
  \fill[red] (2.5,-0.85) circle (2.2pt);
  \node[red, below, font=\small] at (-2.5,-0.9) {$-1$};
  \node[red, below, font=\small] at (2.5,-0.9) {$-1$};
  \fill[red] (-1.25,0) circle (2.2pt);
  \fill[red] (1.25,0) circle (2.2pt);
  \node[red, above left, font=\small] at (-1.25,0) {$t_0$};
  \node[red, above right, font=\small] at (1.25,0) {$t_0$};
  \node[font=\small] at (0,1.65) {$w'' - T_{n,K}w' + \lambda w = 0$};
  \node[font=\small] at (0,-1.55) {$w'(\pm D/2) = 0$};
\end{tikzpicture}
\caption{The one-dimensional Bakry--Qian model problem on the symmetric interval $[-D/2, D/2]$. The first nontrivial Neumann eigenfunction $w$ is shown here for the flat case $K=0$, where it is a cosine-like profile; it attains its extrema $\pm u^*$ at the endpoints and at the centre, and satisfies the Neumann boundary condition $w'(\pm D/2)=0$. The points where $w=0$ are marked by $t_0$. For $K>0$, the weight $T_{n,K}$ diverges at the endpoints, and the eigenfunction becomes more concentrated near the centre; for $K<0$, the profile becomes flatter than in the flat case.}
\label{fig:model}
\end{figure}

Andrews and Clutterbuck \cite{andrews2013sharp} later gave an independent proof of Theorem \ref{thm:bakryqian} using a two-point maximum principle, thereby clarifying the connection between the eigenvalue estimate and the modulus of continuity of solutions to the heat equation.

\begin{remark}\label{rem:interpolation}
The estimate of Theorem \ref{thm:bakryqian} interpolates between the Lichnerowicz and Zhong--Yang bounds. Indeed, a direct computation yields $\lambda_1(n,0,D) = \pi^2/D^2$ and $\lambda_1(n,K,\pi/\sqrt{K}) = nK$ for $K > 0$. More quantitatively, Shi and Zhang \cite{shi2007lower} obtained the interpolation inequality
\[
\lambda_1(n,K,D) \ge \max_{s \in (0,1)} 4(s-s^2)\frac{\pi^2}{D^2} + s(n-1)K.
\]
In the negative curvature regime, Yang \cite{yang1990_estimate_eigenvalue} derived the lower bound
\[
\lambda_1(M) \ge \frac{\pi^2}{D^2}\exp\left(-c_nD\sqrt{(n-1)|K|}\right),
\]
where $c_n = \max\{2, n-1\}$. These complementary estimates indicate that the behavior of $\lambda_1$ depends in a subtle way on the sign of the curvature bound.
\end{remark}

Figure~\ref{fig:interpolation} depicts qualitatively the interpolation between the two classical regimes. The dashed curve represents the Bakry--Qian eigenvalue $\lambda_1(n,K,D)$ for fixed $n$ and $K>0$; it approaches the Lichnerowicz bound $nK$ as the diameter becomes small, and the Zhong--Yang bound $\pi^2/D^2$ as the diameter becomes large.

\begin{figure}[ht]
\centering
\begin{tikzpicture}[scale=1.0]
  \draw[->, thick] (0,0) -- (8.2,0) node[right] {\small $D$};
  \draw[->, thick] (0,0) -- (0,5.2) node[above] {\small $\lambda_1$};
  \draw[red, thick] (0.3,3.8) -- (7.8,3.8);
  \node[red, right, font=\small] at (7.8,3.8) {$nK$ (Lichnerowicz)};
  \draw[blue, thick, domain=0.6:7.8, samples=100] plot (\x, {1.6/(\x*\x/4)});
  \node[blue, right, font=\small] at (7.8,0.55) {$\pi^2/D^2$ (Zhong--Yang)};
  \draw[green!55!black, very thick, dashed, domain=0.6:7.8, samples=120] plot (\x, {3.8*exp(-2.2/\x)+1.6/(\x*\x/4)});
  \node[green!55!black, font=\small] at (4.6,4.45) {$\lambda_1(n,K,D)$};
  \draw[gray, densely dotted] (0.6,3.8) -- (0.6,0);
  \node[gray, below, font=\scriptsize] at (0.6,-0.05) {$\pi/\sqrt{K}$};
  \node[font=\scriptsize, gray] at (1.6,1.4) {small $D$};
  \node[font=\scriptsize, gray] at (6.4,1.4) {large $D$};
\end{tikzpicture}
\caption{Qualitative interpolation between the Lichnerowicz bound $nK$ (small diameter regime) and the Zhong--Yang bound $\pi^2/D^2$ (large diameter regime). The plot is qualitative only, drawn with $n=3$ and $K=1$ for illustration; the horizontal axis is the diameter $D$, the vertical axis the first nonzero eigenvalue $\lambda_1$. The dashed curve represents $\lambda_1(n,K,D)$.}
\label{fig:interpolation}
\end{figure}

\subsection{Integral curvature conditions and related work}

The hypotheses of Theorem \ref{thm:bakryqian} are pointwise in nature: the Ricci curvature is required to satisfy a lower bound at every point. In many geometric and analytic contexts, however, such a pointwise control is either unavailable or unnecessarily restrictive. This observation has motivated a substantial body of work aimed at replacing pointwise curvature bounds by integral ones, in which only the $L^p$-norm of the curvature deficit is controlled.

In this direction, Ramos Oliv\'e, Seto, and Tuerkoen \cite{RamosSetoTuerkoen} introduced the quantity
\[
\kappa(p, K) = \left(\frac{1}{\vol(M)}\int_M \rho_K^p \, dV\right)^{1/p},
\]
where $\vol(M)$ denotes the Riemannian volume of $M$, $dV$ its volume element, and, for $x \in M$, $\rho(x)$ denotes the smallest eigenvalue of the Ricci tensor $\Ric(x)$ and
\[
\rho_K(x) = \max\{-(\rho(x) - (n-1)K), 0\}
\]
measures the amount by which the smallest Ricci eigenvalue falls below $(n-1)K$ at the point $x$. Here, as in Theorem \ref{thm:bakryqian}, the constant $K$ is the given curvature lower bound. The exponent $p$ is required to satisfy $p > n/2$, which ensures that the relevant Sobolev inequalities hold. They proved that for any $\alpha \in (0,1)$ and $p > n/2$, there exists $\varepsilon_0 > 0$ such that whenever $\kappa(p, K) < \varepsilon_0$,
\[
\lambda_1(M) \ge \alpha \, \lambda_1(n, K, D).
\]
This confirmed a conjecture raised in \cite{ramos2020zhong}. The Lichnerowicz estimate with control on integral curvature had been established earlier by Aubry \cite{aubry2007finiteness}, who showed that for $K > 0$,
\[
\lambda_1(M) \ge nK(1 - C(n,p)\kappa(p,K)),
\]
where $C(n,p) > 0$ is a constant depending only on $n$ and $p$. The Zhong--Yang estimate, corresponding to the case $\Ric \ge 0$, was generalized to the integral curvature setting by Ramos Oliv\'e, Seto, Wei, and Zhang \cite{ramos2020zhong}. Rose and Wei \cite{rose-wei} subsequently extended the Zhong--Yang estimate to the closely related Kato-type condition.

Parallel to these developments, there has been considerable interest in understanding the structure of curvature tensors through their trace-free decompositions and the associated minimal norm principles. In a series of papers, Guo and Guan \cite{GuoGuan2021} and Guo and Li \cite{GuoLi2024, GuoLi2026} studied the minimal norm tensors of general third- and fourth-order covariant tensors, and applied these ideas to characterize Weyl and Cotton tensors, as well as to classify hypersurfaces whose covariant derivative of the Ricci tensor has vanishing minimal norm. These works provide a systematic framework for measuring the deviation of a given tensor from its canonical trace-free form, and the traceless Ricci tensor $\mathring{\Ric} := \Ric - \frac{R}{n}g$ that appears in the present paper may be viewed as a natural instance of this general principle. We emphasize, however, that the present paper does not rely on the minimal norm machinery; the traceless Ricci tensor enters only through its $L^p$-norm, which serves as a convenient quantitative measure of the deviation from the Einstein condition.

\subsection{The nearly Einstein condition}\label{sec:intro-nearly-Einstein}

The present paper is concerned with a different but closely related question. Rather than assuming that the Ricci curvature is bounded below by a constant multiple of the metric, we assume that the Ricci curvature is close to that of an Einstein manifold in an integral sense. This perspective is motivated by the observation that Einstein manifolds, characterized by the condition
\[
\Ric = \frac{R}{n}g,
\]
where $R$ denotes the scalar curvature, constitute a natural and geometrically significant class, and that many results in Riemannian geometry admit quantitative refinements in which the deviation from the Einstein condition plays the role of a small parameter.

To make this precise, we introduce the traceless Ricci tensor
\[
\mathring{\Ric} := \Ric - \frac{R}{n}g,
\]
which vanishes precisely when $M$ is Einstein. We then define the \emph{integral Einstein deviation} by
\[
\kappa_{\mathrm{Ein}}(p) := \left(\frac{1}{\vol(M)}\int_M \Theta^p \, dV\right)^{1/p},
\]
where
\[
\Theta(x) := \left|\mathring{\Ric}(x)\right| + \frac{|R(x) - R_E|}{n},
\]
$R_E > 0$ is a fixed reference scalar curvature, and $|\cdot|$ denotes the pointwise norm induced by $g$ on tensors. 
The quantity $\Theta$ combines two sources of deviation from the Einstein condition: the traceless part of the Ricci tensor, which measures the failure of the Ricci tensor to be a multiple of the metric, and the deviation of the scalar curvature from the reference value $R_E$. We note that $\kappa_{\mathrm{Ein}}(p) = 0$ if and only if $M$ is an Einstein manifold with scalar curvature $R_E$.

\begin{definition}[Nearly Einstein manifold]\label{def:nearly-Einstein}
Let $(M^n, g)$ be a compact Riemannian manifold, let $p > n/2$, and let $R_E > 0$ be a reference scalar curvature. For $\varepsilon > 0$, we say that $M$ is \emph{$\varepsilon$-nearly Einstein} with respect to $R_E$ and $p$ if
\[
\kappa_{\mathrm{Ein}}(p) < \varepsilon.
\]
\end{definition}

Equivalently, $M$ is $\varepsilon$-nearly Einstein if the $L^p$-norm of the combined deviation $\Theta$ is less than $\varepsilon$; that is, the Ricci curvature of $M$ is close, in an integral sense, to a multiple of the metric with constant $R_E/n$.

Throughout the paper, we work with a fixed reference scalar curvature $R_E > 0$ and set
\[
K_E := \frac{R_E}{n(n-1)}.
\]
This constant $K_E$ is the natural candidate for the curvature lower bound $K$ appearing in Theorem \ref{thm:bakryqian}: indeed, if $M$ is an Einstein manifold with scalar curvature $R_E$, then its Ricci tensor satisfies $\Ric = \frac{R_E}{n}g = (n-1)K_E g$, so that the pointwise Ricci bound $\Ric \ge (n-1)K_E$ holds with equality. In this sense, $K_E$ is the curvature constant associated with the reference Einstein manifold, and the nearly Einstein condition $\kappa_{\mathrm{Ein}}(p) < \varepsilon$ asserts that the Ricci curvature of $M$ is close, in an integral sense, to a multiple of the metric with constant $K_E$. The reader should keep in mind that throughout the paper, $K$ always denotes the curvature lower bound of the Bakry--Qian theorem, while $K_E$ denotes its natural Einstein reference value; the two coincide in the limiting case $\kappa_{\mathrm{Ein}}(p) = 0$.

Figure~\ref{fig:decomposition} illustrates the decomposition of the Ricci tensor into its Einstein part, its trace part, and its traceless part. The smallness of the $L^p$-norm of the last two components is precisely the content of the nearly Einstein condition.

\begin{figure}[ht]
\centering
\begin{tikzpicture}[scale=1.0]
  \node[font=\small] at (0,2.3) {$\Ric$};
  \node[font=\small] at (0.8,2.3) {$=$};
  \draw[fill=blue!25, rounded corners, draw=blue!60] (1.2,1.75) rectangle (3.4,2.85);
  \node[font=\small] at (2.3,2.3) {$(n-1)K_E\,g$};
  \node[font=\scriptsize, blue!70!black] at (2.3,1.5) {Einstein part};
  \node[font=\small] at (3.65,2.3) {$+$};
  \draw[fill=green!25, rounded corners, draw=green!60!black] (3.9,1.75) rectangle (5.4,2.85);
  \node[font=\small] at (4.65,2.3) {$\theta\,g$};
  \node[font=\scriptsize, green!55!black] at (4.65,1.5) {trace};
  \node[font=\small] at (5.65,2.3) {$+$};
  \draw[fill=red!25, rounded corners, draw=red!60] (5.9,1.75) rectangle (7.6,2.85);
  \node[font=\small] at (6.75,2.3) {$\mathring{\Ric}$};
  \node[font=\scriptsize, red!70!black] at (6.75,1.5) {traceless};
  \draw[decorate, decoration={brace, amplitude=5pt}, thick, gray] (3.9,1.05) -- (7.6,1.05);
  \node[font=\scriptsize, gray] at (5.75,0.75) {small in $L^p$ when $\kappa_{\mathrm{Ein}}(p)$ is small};
  \draw[->, thick, gray] (5.75,0.4) -- (5.75,-0.15);
  \node[font=\scriptsize, gray, text width=6cm, align=center] at (5.75,-0.45) {enters the Bochner formula as a controlled perturbation};
\end{tikzpicture}
\caption{Decomposition of the Ricci tensor into its Einstein part $(n-1)K_E\,g$, its trace part $\theta\,g$, and its traceless part $\mathring{\Ric}$, where $\theta := (R - R_E)/n$ and $K_E = R_E/(n(n-1))$. The nearly Einstein condition requires the last two components to be small in the $L^p$-norm, measured by the quantity $\kappa_{\mathrm{Ein}}(p)$. This decomposition is used in the Bochner estimate \eqref{eq:bochner}.}
\label{fig:decomposition}
\end{figure}

Figure~\ref{fig:sponge} visualizes the role of the auxiliary function $J$, which is constructed so as to absorb the Einstein deviation $\Theta$. The equation satisfied by $J$ is a nonlinear elliptic equation; its solution is close to the constant function $1$ when $\kappa_{\mathrm{Ein}}(p)$ is small, and this closeness is what allows the maximum principle argument to proceed.

\begin{figure}[ht]
\centering
\begin{tikzpicture}[scale=1.0]
  \begin{scope}[shift={(-3.6,0)}]
    \draw[thick, blue!60!black, fill=blue!5] plot[smooth cycle, tension=0.8] coordinates {(-1.5,0.2) (-1.1,1.0) (0,1.2) (1.1,0.9) (1.5,0.1) (1.0,-0.9) (0,-1.1) (-1.0,-0.8)};
    \foreach \r in {0.3,0.6,0.9} {
      \draw[gray!40, densely dotted] (0,0) circle (\r);
    }
    \fill[red] (0,0) circle (2.5pt);
    \node[red, right, font=\small] at (0.05,0) {$J \approx 1$};
    \node[font=\small] at (0,-1.5) {$J$ close to $1$};
    \node[font=\small] at (0,-1.85) {when $\kappa_{\mathrm{Ein}}(p)$ small};
  \end{scope}
  \draw[->, thick, gray] (-1.7,0) -- (-0.5,0);
  \node[font=\scriptsize, gray, above] at (-1.1,0.05) {Moser};
  \begin{scope}[shift={(2.0,0)}]
    \node[font=\small, text width=4.5cm, align=center] at (0,0.4) {$\Delta J - \tau\,\dfrac{|\nabla J|^2}{J} - 2J\Theta = -\sigma J$};
    \node[font=\scriptsize, gray, text width=5cm, align=center] at (0,-0.6) {with Neumann boundary condition\\$\partial_\nu J|_{\partial M} = 0$};
    \node[font=\scriptsize, gray, text width=5cm, align=center] at (0,-1.5) {$\sigma \le 4\varepsilon$, \quad $|J-1| \le \delta$};
  \end{scope}
\end{tikzpicture}
\caption{The auxiliary function $J$ (the ``sponge''), a positive solution of the nonlinear elliptic equation \eqref{eq:J} with Neumann boundary conditions; here $\tau > 1$ and $\sigma \ge 0$ are parameters. When the integral Einstein deviation $\kappa_{\mathrm{Ein}}(p)$ is small, $J$ is close to the constant function $1$, and it absorbs the Einstein deviation $\Theta$ in the Bochner estimate.}
\label{fig:sponge}
\end{figure}

The main result of this paper is the following.

\begin{theorem}[Main theorem]\label{thm:main}
Let \(M\) be an \(n\)-dimensional compact Riemannian manifold, possibly with
nonempty convex and \(C^2\) boundary, with \(n \ge 2\) and \(\diam(M) \le D\)
for some \(D>0\). Let \(p>n/2\), let \(R_E>0\) be a reference scalar
curvature, and set
\[
K_E := \frac{R_E}{n(n-1)}.
\]
Then for any \(\alpha\in(0,1)\), there exists
\(\varepsilon_0=\varepsilon_0(n,p,\alpha,D,R_E)>0\) such that whenever \(M\) is
\(\varepsilon_0\)-nearly Einstein with respect to \(R_E\) and \(p\)
(equivalently, whenever \(\kappa_{\mathrm{Ein}}(p)<\varepsilon_0\)), one has
\[
\lambda_1(M)\ge \alpha\,\lambda_1(n,K_E,D).
\]
Here, for a real parameter \(K\), \(\lambda_1(n,K,D)\) denotes the first
nonzero eigenvalue of the Bakry--Qian one-dimensional model problem
\eqref{eq:model} with curvature parameter \(K\) whenever the model problem is
well-defined, and we use the convention
\[
\lambda_1(n,K,D):=nK
\]
in the inactive regime \(K>0\), \(D\ge \pi/\sqrt{K}\), as in
\eqref{eq:lambda-segment}.
\end{theorem}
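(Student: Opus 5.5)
The plan is to reduce Theorem~\ref{thm:main} to the integral-curvature Bakry--Qian estimate of Ramos Oliv\'e, Seto and Tuerkoen \cite{RamosSetoTuerkoen}. In the course of this we also indicate how the auxiliary function $J$ of Figure~\ref{fig:sponge} enters if one prefers a self-contained argument.

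\textbf{Step 1: pointwise comparison.} Write $\theta:=(R-R_E)/n$, so that
\[
\Ric=(n-1)K_Eg+\theta g+\mathring{\Ric}.
\]
For a unit vector $v$ we have $\mathring{\Ric}(v,v)\ge -|\mathring{\Ric}|$. Hence the smallest Ricci eigenvalue satisfies
\[
\rho\ge (n-1)K_E-|\theta|-|\mathring{\Ric}|=(n-1)K_E-\Theta .
\]
It follows that $\rho_{K_E}\le\Theta$ pointwise, and therefore $\kappa(p,K_E)\le\kappa_{\mathrm{Ein}}(p)$.

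\textbf{Step 2: invoke the integral Bakry--Qian theorem.} The result of \cite{RamosSetoTuerkoen} gives, for $\alpha\in(0,1)$, $p>n/2$ and $\diam(M)\le D$, a constant $\varepsilon_1(n,p,\alpha,D,K_E)$ such that $\kappa(p,K_E)<\varepsilon_1$ implies $\lambda_1(M)\ge\alpha\lambda_1(n,K_E,D)$. Setting $\varepsilon_0:=\varepsilon_1$, which depends on $R_E$ through $K_E$, Step~1 yields the claim whenever the model problem is well-defined, that is, when $D<\pi/\sqrt{K_E}$.

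\textbf{Step 3: the inactive regime.} If $D\ge\pi/\sqrt{K_E}$, the target is $\alpha nK_E$. The first option is Aubry's estimate \cite{aubry2007finiteness}, $\lambda_1\ge nK_E(1-C(n,p)\kappa(p,K_E))$. Choosing $\varepsilon_0$ so that $C(n,p)\varepsilon_0\le 1-\alpha$ finishes the proof. Note that Aubry's result needs no diameter assumption.

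Alternatively, one can use monotonicity of the model eigenvalue: $\lambda_1(n,K,D')$ decreases in $D'$, and by the convention \eqref{eq:lambda-segment} it tends to $nK$ as $D'\uparrow\pi/\sqrt K$. Applying Step~2 with the diameter $D'=\pi/\sqrt{K_E}-\eta$ is not legitimate here, since $\diam(M)$ could exceed $D'$. This is exactly why Aubry's estimate is the better route in this regime.

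\textbf{Self-contained route via $J$.} If one wants to avoid citing \cite{RamosSetoTuerkoen}, the steps follow the abstract.

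\emph{Existence of $J$.} Solve
\[
\Delta J-\tau|\nabla J|^2/J-2J\Theta=-\sigma J,\qquad \partial_\nu J=0.
\]
The substitution $J=\phi^{1/(1-\tau)}$ linearizes this to the eigenvalue problem
\[
\Delta\phi+2(\tau-1)\Theta\phi=(\tau-1)\sigma\phi,
\]
so $\phi$ is a first Neumann eigenfunction, which is positive. One then bounds $\sigma\lesssim\varepsilon$ and $|J-1|\le\delta$ by Moser iteration. This step uses a Sobolev inequality under integral Ricci bounds with $p>n/2$, valid for small $\kappa$ and bounded diameter (Petersen--Wei/Aubry type).

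\emph{Maximum principle.} Bochner's formula gives
\[
\tfrac12\Delta|\nabla u|^2\ge\frac{(\Delta u)^2}{n}+\langle\nabla u,\nabla\Delta u\rangle+((n-1)K_E-\Theta)|\nabla u|^2 .
\]
Apply the Bakry--Qian/Kröger comparison argument to the quantity $J|\nabla u|^2-\psi(u)$, where $\psi$ is built from the model eigenfunction with $\lambda$ replaced by $\alpha\lambda$. The equation for $J$ is designed to cancel the $-\Theta|\nabla u|^2$ term up to errors controlled by $\sigma$ and $\delta$. The convexity of $\partial M$ together with the Neumann conditions on $u$ and $J$ rule out boundary maxima.

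\textbf{Main obstacle.} In the self-contained route, the main obstacle is closing the maximum principle with the gradient cross terms $\nabla J\cdot\nabla|\nabla u|^2$. The choice $\tau>1$ is made so that these are absorbed by the refined Kato term, and one then needs uniform control of $\delta$ through Moser iteration. Using the reduction in Steps~1--3 avoids this entirely, and that is the route I would write down.
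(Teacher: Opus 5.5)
Your proof is correct, and it takes a genuinely different and much shorter route than the paper.

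The paper re-runs the whole Ramos Oliv\'e--Seto--Tuerkoen argument with $\rho_K$ replaced by $\Theta$. It constructs the sponge $J$ from $\Theta$ by Moser iteration, re-deriving Neumann Sobolev and Poincar\'e inequalities in the appendix. It proves the gradient comparison $J|\nabla u|^2\le (w')^2\circ w^{-1}(u)$ by a maximum principle built on the decomposition $\Ric=(n-1)K_Eg+\theta g+\mathring{\Ric}$. It then passes through maximum comparison, maximum matching and a diameter comparison.

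You instead note that $\rho\ge(n-1)K_E-\Theta$ pointwise, so $\rho_{K_E}\le\Theta$ and $\kappa(p,K_E)\le\kappa_{\mathrm{Ein}}(p)$. The theorem is then a direct corollary of the integral Bakry--Qian theorem with $K=K_E$, with Aubry's integral Lichnerowicz bound covering the regime $D\ge\pi/\sqrt{K_E}$.

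Your reduction also explains why the paper's argument works at all. In the paper's maximum principle, $\Theta$ enters only through the Bochner inequality $\Ric(\nabla u,\nabla u)\ge((n-1)K_E-\Theta)|\nabla u|^2$ of Step C, which is exactly the statement $\rho_{K_E}\le\Theta$. The same pointwise domination is what legitimizes each ``replace $\rho_K$ by $\Theta$'' claim: the Petersen--Wei volume comparison, the Sobolev inequality, and the Moser iteration. The paper only ever uses $\Theta$ as an upper bound for the Ricci deficit, and its sponge absorbs the larger potential $\Theta$ rather than $\rho_{K_E}$. So its longer route yields nothing beyond your corollary; what it offers is an explicit gradient comparison, whereas yours treats the cited theorem as a black box.

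Two minor points to tidy up:
\begin{enumerate}
\item Take $\varepsilon_0$ also below Aubry's smallness threshold. His constant depends on $K_E$ by scaling, which the statement allows.
\item Aubry's estimate is stated for closed manifolds, so in the convex-boundary case you need its Neumann analogue. The paper relies on the same input in its maximum-matching step without further comment.
\end{enumerate}
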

\begin{remark}\label{rem:RE-choice}
The reference scalar curvature $R_E$ should be chosen so that it is comparable to the average scalar curvature of $M$; otherwise, the quantity $\kappa_{\mathrm{Ein}}(p)$ may fail to be small for any positive $\varepsilon_0$. For instance, if $M$ is known to be close to an Einstein manifold with positive scalar curvature, one may take $R_E$ to be the average scalar curvature. The dependence of $\varepsilon_0$ on $R_E$ enters only through the choice of $\overline{K}$ in Theorem \ref{thm:gradient} and through the Aubry estimate \cite{aubry2007finiteness}.
\end{remark}

\begin{remark}\label{rem:sharp}
The estimate of Theorem \ref{thm:main} is consistent with the classical Bakry--Qian theorem in the following sense. As $\kappa_{\mathrm{Ein}}(p) \to 0$, we have $R \to R_E$ and $\mathring{\Ric} \to 0$, so that $M$ approaches an Einstein manifold with scalar curvature $R_E$. Setting $K = K_E = R_E/(n(n-1))$, the conclusion of Theorem \ref{thm:main} reduces, in the limit $\alpha \to 1$, to the conclusion of Theorem \ref{thm:bakryqian}. We do not claim that $\alpha = 1$ holds when $\kappa_{\mathrm{Ein}}(p) > 0$; the parameter $\alpha$ reflects the loss incurred by replacing the pointwise Ricci curvature bound with an integral Einstein deviation condition.
\end{remark}

\begin{remark}\label{rem:comparison-RST}
Compared with \cite{RamosSetoTuerkoen}, the key difference is that the reference
curvature is not a fixed constant $K$ but the curvature
$K_E = R_E/(n(n-1))$ of an Einstein manifold. Consequently, the deviation
$\Theta$ simultaneously controls the traceless Ricci tensor and the scalar
curvature deviation, and the auxiliary function equation must be constructed
to absorb this combined deviation. This leads to the new term $\sigma$ in the
gradient comparison and to the Aubry-type estimate
\cite{aubry2007finiteness} entering the choice of $\overline{K}$.
\end{remark}

\subsection{Strategy of the proof}

The proof of Theorem \ref{thm:main} adapts the strategy of Ramos Oliv\'e--Seto--Tuerkoen \cite{RamosSetoTuerkoen} to the setting of a varying reference curvature. Two modifications are required.

First, the auxiliary function $J$ (referred to in \cite{RamosSetoTuerkoen} as the ``sponge'') is constructed to absorb the Einstein deviation $\Theta$ rather than the Ricci curvature deficit $\rho_K$. The equation satisfied by $J$ takes the form
\[
\Delta J - \tau \frac{|\nabla J|^2}{J} - 2J\Theta = -\sigma J,
\]
with Neumann boundary conditions; here $\tau > 1$ and $\sigma \ge 0$ are parameters. The key property of $J$ is that it is close to $1$ in the $L^\infty$-norm when $\kappa_{\mathrm{Ein}}(p)$ is small; this is established by a Moser iteration argument, which requires a global Neumann Sobolev inequality and a Neumann Poincar\'e inequality under integral Einstein deviation bounds.

Second, the Bochner formula is applied to the decomposition
\[
\Ric = (n-1)K_E g + \theta g + \mathring{\Ric},
\]
where $K_E = R_E/(n(n-1))$ and $\theta = (R - R_E)/n$. This decomposition separates the Einstein part of the Ricci tensor from its traceless part and from the scalar curvature deviation, and thereby makes it possible to exploit the smallness of $\Theta$ in the Bochner estimate. We emphasize that the same quantity $\Theta$ appears both in the auxiliary function equation and in the Bochner estimate, which is essential for the maximum principle argument in Section \ref{sec:gradient}.

The logical structure of the proof may be described as follows. The gradient comparison (Section \ref{sec:gradient}) is the technical core; it relies on the auxiliary function $J$ and on the Bochner formula applied to the Ricci decomposition, and yields the pointwise estimate $J|\nabla u|^2 \le (w')^2 \circ (w^{-1}(u))$. The maximum comparison and maximum matching (Section \ref{sec:max}) then translate this gradient estimate into a statement about the maximum of the eigenfunction, by means of a reverse Poincar\'e argument and a continuity argument in the starting point of the one-dimensional initial value problem. Finally, the diameter comparison (Section \ref{sec:proof}) converts the maximum estimate into the eigenvalue estimate, using the domain monotonicity of the first Neumann eigenvalue of the model operator.

The remainder of the paper is organized as follows. In Section \ref{sec:prelim}, we fix notation and collect several preliminary results from the literature. In Section \ref{sec:gradient}, we prove the gradient comparison theorem, which is the technical core of the paper. In Section \ref{sec:max}, we establish the maximum comparison and the maximum matching theorem. In Section \ref{sec:proof}, we complete the proof of Theorem \ref{thm:main} by means of a diameter comparison. In Appendix \ref{app:sobolev}, we prove the global Neumann Sobolev and Poincar\'e inequalities under integral Einstein deviation bounds.

\section{Notation and Preliminaries}\label{sec:prelim}

\subsection{Basic notation}

Let $M$ be an $n$-dimensional compact Riemannian manifold with metric $g$, possibly with nonempty convex $C^2$ boundary $\partial M$, and let $n \ge 2$ denote its dimension. We denote by $u$ the eigenfunction for the first nonzero eigenvalue $\lambda_1$ of the Neumann problem \eqref{eq:eigen}, normalized so that
\[
-1 = \min_M u < 0 < \max_M u := u^* \le 1.
\]
Here $u^*$ denotes the maximum of $u$ on $M$. We denote by $\Hess u$ the Hessian of $u$, that is, the symmetric $(0,2)$-tensor $\Hess u := \nabla \nabla u$, and set
\[
A_u := \Hess u\left(\frac{\nabla u}{|\nabla u|}, \frac{\nabla u}{|\nabla u|}\right),
\]
which is the second derivative of $u$ in the direction of its gradient, evaluated on the unit vector $\nabla u / |\nabla u|$. This quantity will appear repeatedly in the maximum principle computations of Section \ref{sec:gradient}.

Let $R$ be the scalar curvature of $(M,g)$, $\Ric$ the Ricci tensor, and
\[
\mathring{\Ric} := \Ric - \frac{R}{n}g
\]
the traceless Ricci tensor, which vanishes precisely when $M$ is Einstein. We define, for $x \in M$,
\[
\eta(x) := |\mathring{\Ric}(x)|, \qquad \theta(x) := \frac{R(x) - R_E}{n}, \qquad \Theta(x) := \eta(x) + |\theta(x)|,
\]
where $|\cdot|$ denotes the norm induced by $g$. For $p > n/2$, the integral Einstein deviation is
\[
\kappa_{\mathrm{Ein}}(p) := \left(\frac{1}{\vol(M)}\int_M \Theta^p \, dV\right)^{1/p}.
\]
Throughout the paper, we work with a fixed reference scalar curvature $R_E > 0$ and set $K_E := R_E/(n(n-1))$. As explained in \S\ref{sec:intro-nearly-Einstein}, this constant $K_E$ is the natural candidate for the curvature lower bound $K$ of the Bakry--Qian theorem \ref{thm:bakryqian}. The choice of $R_E$ is discussed in Remark \ref{rem:RE-choice}.

\subsection{The one-dimensional model}

We recall the one-dimensional model introduced by Bakry and Qian. For $\overline{n} > 1$, $\overline{K} \in \mathbb{R}$, and $\overline{\lambda} > 0$, let $T$ be a solution to the Riccati equation
\begin{equation}\label{eq:riccati}
T' = \frac{T^2}{\overline{n}-1} + (\overline{n}-1)\overline{K}.
\end{equation}
We consider the initial value problem
\begin{equation}\label{eq:ivp}
\begin{cases}
w'' - T w' + \overline{\lambda} w = 0, \\[2pt]
w(a) = -1, \quad w'(a) = 0,
\end{cases}
\end{equation}
where $a \in \mathbb{R}$ is a starting point, $w$ is the unknown function, and $\overline{\lambda}$ is a positive parameter. We denote by $d(a, T, \overline{\lambda}) > 0$ the smallest positive number $d > a$ such that $w'(a+d) = 0$, and set $b := a + d(a, T, \overline{\lambda})$. The existence and uniqueness of $w$ follow from standard ODE techniques. The parameters $\overline{n}$, $\overline{K}$, and $\overline{\lambda}$ are perturbations of $n$, $K_E$, and $\lambda_1$, respectively, and will be specified in the proof of Theorem \ref{thm:gradient}.

We recall the following comparison result of Bakry and Qian \cite{bakry2000some} (Section 7, Theorem 13).

\begin{proposition}\label{prop:central}
For any $\overline{n} \ge 1$ and $\overline{K} \in \mathbb{R}$, let $T$ be any solution to the Riccati equation \eqref{eq:riccati}. Then
\[
\lambda_1(T, [a,b]) \ge \lambda_1\left(T_{\overline{n},\overline{K}}, \left[-\frac{d}{2}, \frac{d}{2}\right]\right),
\]
where $d = b - a$, $\lambda_1(T, [a,b])$ denotes the first Neumann eigenvalue of the operator $\mathcal{L}_T := \frac{d^2}{dt^2} - T\frac{d}{dt}$ on $[a,b]$, and $T_{\overline{n},\overline{K}}$ is the model solution given by
\[
T_{\overline{n},\overline{K}}(x) = \begin{cases}
(\overline{n}-1)\sqrt{\overline{K}}\tan(\sqrt{\overline{K}}x), & \overline{K} > 0, \\[2pt]
0, & \overline{K} = 0, \\[2pt]
-(\overline{n}-1)\sqrt{-\overline{K}}\tanh(\sqrt{-\overline{K}}x), & \overline{K} < 0.
\end{cases}
\]
\end{proposition}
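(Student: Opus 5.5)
The plan is to reduce the statement to a one-parameter comparison among translates of a single explicit profile. Then I would show that the symmetric translate is extremal, using a reflection argument together with a Hadamard-type variation formula for the Neumann eigenvalue.

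\emph{Step 1: classify the Riccati solutions.} Write the explicit solutions of \eqref{eq:riccati} on an interval containing $[a,b]$ on which $T$ is finite. For $\overline{K}>0$ every such solution is a translate $T(x)=T_{\overline{n},\overline{K}}(x-c)$, and finiteness on $[a,b]$ forces $d<\pi/\sqrt{\overline{K}}$. For $\overline{K}=0$ the solutions are $T\equiv0$ or $T(x)=-(\overline{n}-1)/(x-c)$ with $c\notin[a,b]$. For $\overline{K}<0$ they are translates of $-(\overline{n}-1)\sqrt{-\overline{K}}\tanh$, translates of $-(\overline{n}-1)\sqrt{-\overline{K}}\coth$ with the pole outside $[a,b]$, or the constants $\pm(\overline{n}-1)\sqrt{-\overline{K}}$. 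Since $\mathcal{L}_T$ commutes with translation of the variable, I may move $[a,b]$ to $[-d/2,d/2]$. In the generic families this leaves the operator $\mathcal{L}_s:=\frac{d^2}{dt^2}-T_{\overline{n},\overline{K}}(t-s)\frac{d}{dt}$ with shift parameter $s$. The $\coth$ and $1/x$ families and the constants are nonsymmetric, and I would treat them at the end as limits.

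\emph{Step 2: reflection symmetry and a variation formula.} Let $\lambda(s)$ be the first Neumann eigenvalue of $\mathcal{L}_s$ on $[-d/2,d/2]$. The profile $T_{\overline{n},\overline{K}}$ is odd, so the reflection $t\mapsto -t$ conjugates $\mathcal{L}_s$ to $\mathcal{L}_{-s}$, and hence $\lambda(s)=\lambda(-s)$. The operator is self-adjoint with respect to the weight $m_s=e^{-\int T(\cdot-s)}$, and its Rayleigh quotient is $\int (w')^2m_s\big/\int w^2m_s$. The first eigenvalue is simple, so differentiating in $s$ gives
\[
\lambda'(s)\int w^2 m_s=-\int (\partial_s\log m_s)\bigl((w')^2-\lambda w^2\bigr)m_s ,
\]
where $\partial_s\log m_s=-T(t-s)+\mathrm{const}$ up to normalization. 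The claim then reduces to showing $s\lambda'(s)\ge0$, so that $\lambda(s)$ is minimized at $s=0$, which is exactly $\lambda_1(T_{\overline{n},\overline{K}},[-d/2,d/2])$.

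\emph{Step 3: the sign of $\lambda'(s)$, the main obstacle.} Here I would use the Riccati equation in the following way. Differentiate the eigenvalue equation to get the equation for $v=w'$:
\[
v''-Tv'+(\lambda-T')v=0,\qquad v\bigl(\pm\tfrac d2\bigr)=0 .
\]
Then integrate the identity $(\lambda w^2-(w')^2)m=\tfrac12\bigl((w^2)'m\bigr)'$ against $T$, and integrate by parts. This turns the integrand into $T'(w')^2m$ plus boundary terms. Using $T'=T^2/(\overline{n}-1)+(\overline{n}-1)\overline{K}$ and the fact that $v=w'$ has one sign, the sign of $\lambda'(s)$ should come down to comparing the mass of $w^2m_s$ on either side of the unique zero of $w$. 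I expect this to be the hardest step. If the direct sign argument stalls, I would fall back on Bakry--Qian's original route: replace the shift monotonicity by a comparison of the maximum $w(b)$ of the solution of \eqref{eq:ivp} across the family, and then convert that comparison into an eigenvalue comparison.

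\emph{Step 4: degenerate families.} The remaining families are limits of the generic ones, so I would finish by continuity. The constant solutions arise as $s\to\pm\infty$ in the $\tanh$ family. The $\coth$ and $1/x$ families arise as limits of translates whose pole approaches the interval from outside, or equivalently by analytic continuation $\sqrt{\overline{K}}\mapsto i\sqrt{-\overline{K}}$. Lower semicontinuity of $\lambda$ together with the monotonicity from Step 3 then extends the inequality to every solution $T$.
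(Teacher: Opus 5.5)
\textbf{Genuine gaps.} The paper gives no proof of this proposition; it quotes Bakry--Qian (Section~7, Theorem~13). Judged as a self-contained argument, your proposal has two genuine gaps.

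First, Step~3 carries the whole content of the proposition for the translate families, and you do not supply it. The computation you outline produces no sign. The correct identity is $((w')^2-\lambda w^2)m=(ww'm)'$; your version has the opposite sign. The signs of $\partial_s\log m_s$ and of your Hellmann--Feynman formula are also flipped, but those two slips cancel. Integrating against $T$ therefore gives
\[
\lambda'(s)\int w^2m_s=\int T(t-s)\bigl((w')^2-\lambda w^2\bigr)m_s\,dt=-\int T'(t-s)\,w\,w'\,m_s\,dt ,
\]
with no boundary terms, since $w'(\pm d/2)=0$. You get $T'ww'm$, not $T'(w')^2m$. This integrand has the sign of $w$, which changes at the zero of $w$, so knowing $w'>0$ and $T'>0$ decides nothing. ``Comparing the mass on either side of the zero'' restates the problem rather than solving it.

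Second, Step~4 fails as stated.
\begin{itemize}
\item For $\overline{K}<0$, every $\tanh$ translate satisfies $|T|<(\overline{n}-1)\sqrt{-\overline{K}}$, while every $\coth$ solution satisfies $|T|>(\overline{n}-1)\sqrt{-\overline{K}}$. So the $\coth$ family is not a limit of the $\tanh$ family; the two share only the constants as limits. Continuity transports an inequality from a family to its limits, not from a common limit into a disjoint family, so the $\coth$ case is unproved.
\item For $\overline{K}=0$, your ``generic family'' is just $T\equiv0$, so Steps~2--3 prove nothing there. The solutions $-(\overline{n}-1)/(t-c)$ are reachable only as limits $\overline{K}\to0^+$ of the $\tan$ translates $-(\overline{n}-1)\sqrt{\overline{K}}\cot(\sqrt{\overline{K}}(t-c))$. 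That route also needs continuity of the right-hand side in $\overline{K}$, which you do not state.
\item Analytic continuation $\sqrt{\overline{K}}\mapsto i\sqrt{-\overline{K}}$ does not transfer inequalities.
\end{itemize}

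\textbf{How to close both gaps.} You already wrote down the right object in Step~3; push it one step further. Since $w'>0$ on $(a,b)$ and $v=w'$ solves $-v''+Tv'+T'v=\lambda v$ with $v(a)=v(b)=0$, the number $\lambda_1(T,[a,b])$ is the Dirichlet ground-state energy of that operator. Substituting $v=e^{\Phi/2}\phi$ with $\Phi'=T$ turns the problem into $-\phi''+V\phi=\lambda\phi$, $\phi(a)=\phi(b)=0$, where, by the Riccati equation,
\[
V=\frac{T'}{2}+\frac{T^2}{4}=\frac{\overline{n}+1}{4(\overline{n}-1)}\,T^2+\frac{(\overline{n}-1)\overline{K}}{2}.
\]
The constant term is the same for all solutions with the given $\overline{K}$, so only $T^2$ matters.
\begin{itemize}
\item For $\overline{K}\le0$, the $\coth$, constant and $-(\overline{n}-1)/(t-c)$ solutions satisfy $T^2\ge(\overline{n}-1)^2|\overline{K}|\ge T_{\overline{n},\overline{K}}^2$ on $[-d/2,d/2]$. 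Monotonicity of the Dirichlet ground state in the potential then settles these cases at once.
\item For translates, $V=V_0(\cdot-s)$ with $V_0$ even and increasing in $|t|$. Compare $\phi$ with its reflection across the midpoint on the half-interval nearer the centre of $V_0$. The maximum principle applies there because that half has a larger first Dirichlet eigenvalue, and it shows $|\phi'|$ is larger at the nearer endpoint.
\item Viewing the shift as moving the interval away from the centre of $V_0$, Hadamard's endpoint formula gives $\lambda'(s)=\phi'(\text{near})^2-\phi'(\text{far})^2\ge0$ for $s>0$ (with $\int\phi^2=1$). Hence $s\lambda'(s)\ge0$.
\end{itemize}
This is the reflection-plus-variation argument you were aiming for, carried out in Schr\"odinger form, where the sign becomes visible.
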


We note that if $d = d(a, T, \overline{\lambda}) < \infty$, the number $\overline{\lambda} > 0$ is a Neumann eigenvalue of the operator $\mathcal{L}_T$ on $[a,b]$. While in general the Neumann eigenvalues do not satisfy the domain monotonicity property, the first Neumann eigenvalues of the operators $\mathcal{L}_T$ do satisfy it. That is, for intervals $I_1, I_2$,
\begin{equation}\label{eq:domain-monotonicity}
I_1 \subset I_2 \text{ implies } \lambda_1(T, I_1) \ge \lambda_1(T, I_2),
\end{equation}
where $\lambda_1(T, I_i)$ denotes the first Neumann eigenvalue of the operator $\mathcal{L}_T$ on the interval $I_i$. This monotonicity will be used in Section \ref{sec:proof} to compare the eigenvalue of the manifold with that of the one-dimensional model.

We denote by $d_{\overline{n},\overline{K},\overline{\lambda}}$ the length of the symmetric interval of the operator $\mathcal{L}_{T_{\overline{n},\overline{K}}}$ with eigenvalue $\overline{\lambda}$, that is,
\[
\overline{\lambda} = \lambda_1\left(T_{\overline{n},\overline{K}}, \left[-\frac{d_{\overline{n},\overline{K},\overline{\lambda}}}{2}, \frac{d_{\overline{n},\overline{K},\overline{\lambda}}}{2}\right]\right).
\]
Note that in the case $\overline{K} > 0$, we assume that $\overline{\lambda} > \overline{n}\overline{K}$, as otherwise such a $d$ might not exist (or $d = \pi/\sqrt{\overline{K}}$ for $\lambda = \overline{n}\overline{K}$). This restriction reflects the fact that the first nonzero Neumann eigenvalue of the model problem on a symmetric interval is bounded below by $\overline{n}\overline{K}$ when $\overline{K} > 0$.

\subsection{The auxiliary function}

We introduce the auxiliary function $J$, which plays the role of the ``sponge'' absorbing the Einstein deviation $\Theta$. The construction is motivated by the observation that, in the pointwise curvature case, the Ricci curvature deficit $\rho_K$ enters the Bochner formula as a negative term, and the auxiliary function serves to convert this negative term into a term that can be absorbed into the eigenvalue.

\begin{definition}\label{def:J}
For $K_E \in \mathbb{R}$, $\tau > 1$, and $\sigma \ge 0$, let $J > 0$ be a solution to
\begin{equation}\label{eq:J}
\Delta J - \tau \frac{|\nabla J|^2}{J} - 2J\Theta = -\sigma J \quad \text{in } M,
\end{equation}
with Neumann boundary conditions in the case $\partial M \neq \emptyset$. Here $\Theta$ is the same quantity that appears in the Bochner estimate \eqref{eq:bochner} below, $\Delta$ is the Laplace--Beltrami operator, and $|\nabla J|$ is the pointwise norm of the gradient of $J$.
\end{definition}

The following lemma establishes the existence of $J$ and shows that $J$ is close to $1$ in the $L^\infty$-norm when $\kappa_{\mathrm{Ein}}(p)$ is small. The proof relies on a Moser iteration argument, which in turn requires a global Neumann Sobolev inequality and a Neumann Poincar\'e inequality under integral Einstein deviation bounds; these are established in Appendix \ref{app:sobolev}.

\begin{lemma}\label{lem:J}
Let $(M,g)$ be a compact manifold, possibly with nonempty $C^2$ convex boundary, of dimension $n$ and diameter $D > 0$, and let $p > n/2$. For any $\delta > 0$ and $\tau > 1$, there exists $\varepsilon = \varepsilon(n, p, D, \tau) > 0$ such that if $\kappa_{\mathrm{Ein}}(p) \le \varepsilon$, then there exist $\sigma$ and a positive function $J$ solving \eqref{eq:J} such that
\[
0 \le \sigma \le 4\varepsilon, \qquad |J - 1| \le \delta.
\]
\end{lemma}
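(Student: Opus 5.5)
The strategy is to linearize the nonlinear equation \eqref{eq:J} by a power substitution, which turns it into a Schrödinger-type eigenvalue problem. Then we show that the principal eigenfunction is close to a constant by Moser iteration, as in Ramos Olivé--Seto--Tuerkoen.

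\textbf{Step 1: linearization.} Set $J=\phi^{\beta}$ with $\beta=1/(1-\tau)<0$ and $\phi>0$. A direct computation gives $\nabla J=\beta\phi^{\beta-1}\nabla\phi$ and
\[
\Delta J=\beta\phi^{\beta-1}\Delta\phi+\beta(\beta-1)\phi^{\beta-2}|\nabla\phi|^2 .
\]
The gradient terms cancel exactly because $\beta-1=\tau\beta$. Hence \eqref{eq:J} becomes the linear problem
\[
\Delta\phi-\frac{2}{\beta}\Theta\phi=-\frac{\sigma}{\beta}\phi,
\qquad\text{i.e.}\qquad
-\Delta\phi+2(\tau-1)^{-1}\cdot(-1)\Theta\phi\cdots ,
\]
with Neumann boundary conditions. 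After sorting the signs, this is $-\Delta\phi+c_\tau\Theta\phi=\mu\phi$, where $c_\tau=2(\tau-1)>0$ and $\mu=(\tau-1)\sigma$, up to sign conventions. We take $\mu$ to be the first eigenvalue of the Schrödinger operator $-\Delta+c_\tau\Theta$ with Neumann condition. Since $\Theta\in L^p$ with $p>n/2$, the potential is form-bounded, so the standard variational theory applies. The first eigenfunction can be taken positive by the usual $|\phi|$ argument and Harnack/strong maximum principle, and $J=\phi^\beta>0$ solves \eqref{eq:J}.

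\textbf{Step 2: smallness of $\sigma$.} Testing the Rayleigh quotient with the constant function gives
\[
0\le\mu\le c_\tau\,\frac{1}{\vol(M)}\int_M\Theta\,dV\le c_\tau\,\kappa_{\mathrm{Ein}}(p)\le c_\tau\varepsilon
\]
by Hölder. Translating back through $\sigma=\mu/(\tau-1)$ yields $\sigma\le 2\varepsilon\le 4\varepsilon$. The constant $4$ leaves room for whatever normalization is chosen.

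\textbf{Step 3: $L^2$ closeness to a constant.} Normalize $\phi$ so that its average is $1$. Writing $\phi=1+\psi$, the energy identity gives
\[
\int_M|\nabla\phi|^2=\int_M(\mu-c_\tau\Theta)\phi^2\le\mu\int_M\phi^2 .
\]
Combined with the Neumann Poincaré inequality of Appendix~\ref{app:sobolev}, this gives $\|\psi\|_{L^2}^2\le C\mu\|\phi\|_{L^2}^2$, so $\|\psi\|_{L^2}\lesssim\varepsilon^{1/2}$ in normalized norms. This step requires that the Poincaré constant be controlled by $(n,p,D)$ under small $\kappa_{\mathrm{Ein}}(p)$. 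That is exactly the content of the appendix results, obtained via the pointwise bound $\Ric\ge(n-1)K_E-n\Theta$, so that the Gallot/Petersen--Wei-type integral-curvature Sobolev theory applies with $\rho_{K_E}\le C\Theta$.

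\textbf{Step 4: Moser iteration, the main obstacle.} Note that $\psi$ satisfies
\[
-\Delta\psi=(\mu-c_\tau\Theta)(1+\psi).
\]
We multiply by $|\psi|^{2q-2}\psi$ and apply the normalized Neumann Sobolev inequality $\|f\|_{2n'/(n'-2)}\le C(\|\nabla f\|_2+\|f\|_2)$ from the appendix, with some $n'>n$ chosen so that $p>n'/2$. The potential term is handled by Hölder: $\int\Theta f^2\le\|\Theta\|_p\|f\|_{2p/(p-1)}^2$. Since $2p/(p-1)$ is strictly below the Sobolev exponent, interpolation lets this term be absorbed when $\varepsilon$ is small. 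The source term $\Theta\cdot1$ contributes a factor $\|\Theta\|_p\le\varepsilon$.

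Iterating over $q_k=\chi^k$ with $\chi>1$ yields
\[
\|\psi\|_\infty\le C(n,p,D,\tau)\bigl(\|\psi\|_2+\varepsilon\bigr)\le C\varepsilon^{1/2}.
\]
Consequently $\phi\in[1-C\varepsilon^{1/2},\,1+C\varepsilon^{1/2}]$, and therefore $|J-1|=|\phi^\beta-1|\le C'\varepsilon^{1/2}\le\delta$ once $\varepsilon$ is small depending on $\delta$ as well as on $(n,p,D,\tau)$. The delicate points are the bookkeeping of the iteration constants, which must depend only on $(n,p,D)$ through the appendix Sobolev constant, and the absorption of the unsigned potential at each step. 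I expect this to go through along the lines of the corresponding lemma in \cite{RamosSetoTuerkoen}, with $\rho_K$ replaced by $\Theta$.
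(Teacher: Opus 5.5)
Your overall route is the paper's. The substitution $J=\phi^{1/(1-\tau)}$ (the paper's $J=W^{-1/(\tau-1)}$) kills the gradient term, $\phi$ is a principal Neumann eigenfunction of a Schr\"odinger operator, and $L^\infty$-closeness comes from Moser iteration with the appendix Sobolev and Poincar\'e inequalities; the paper simply cites Proposition~2 of Ramos Oliv\'e--Seto--Wei--Zhang for this. However, the sign you ``sort out'' in Step~1 is not a convention, and Steps~2 and~3 only work because it is wrong. Your own formula $\Delta\phi-\frac{2}{\beta}\Theta\phi=-\frac{\sigma}{\beta}\phi$ with $1/\beta=1-\tau$ reads $\Delta\phi+2(\tau-1)\Theta\phi=(\tau-1)\sigma\phi$, that is,
\[
(-\Delta-c_\tau\Theta)\phi=-\mu\phi,\qquad c_\tau=2(\tau-1),\quad \mu=(\tau-1)\sigma .
\]
So $\phi$ must be the ground state of the operator with the attractive potential $-c_\tau\Theta$, with eigenvalue $-\mu$. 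If you take the ground state of $-\Delta+c_\tau\Theta$ instead, then $J=\phi^\beta$ satisfies $\Delta J-\tau|\nabla J|^2/J+2J\Theta=\sigma J$, which is not \eqref{eq:J}. That matters: the sign $-2J\Theta$ is exactly what Step~D of Theorem~\ref{thm:gradient} uses to absorb the $-\Theta|\nabla u|^2$ term from the Bochner formula.

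With the correct sign, testing the Rayleigh quotient with constants gives $-\mu\le -c_\tau\bar\Theta$, i.e.\ $\sigma\ge 2\bar\Theta\ge 0$, where $\bar\Theta=\frac{1}{\vol(M)}\int_M\Theta\,dV$. This is the free direction. The bound you actually need is $\sigma\le 4\varepsilon$, which enters $\overline{K}$ and the positivity of the $(w')^2$ coefficient in Theorem~\ref{thm:gradient}. It now requires a \emph{lower} bound on the bottom of the spectrum of $-\Delta-c_\tau\Theta$, and this is where Proposition~\ref{prop:sobolev} and the smallness of $\varepsilon$ must enter. By H\"older in averaged norms,
\[
\frac{1}{\vol(M)}\int_M c_\tau\Theta f^2\,dV\le c_\tau\varepsilon\bigl(C_S\|\nabla f\|_2^*+\|f\|_2^*\bigr)^2\le 2c_\tau C_S^2\varepsilon\,(\|\nabla f\|_2^*)^2+2c_\tau\varepsilon\,(\|f\|_2^*)^2 .
\]
So if $2c_\tau C_S^2\varepsilon\le 1$, the Rayleigh quotient is at least $-2c_\tau\varepsilon$. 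Hence $\mu\le 4(\tau-1)\varepsilon$, i.e.\ $\sigma\le 4\varepsilon$. This, not ``room for normalization'', is where the constant $4$ and the $\tau$-dependence of $\varepsilon$ come from.

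Step~3 reverses in the same way. The energy identity is $\int_M|\nabla\phi|^2=\int_M(c_\tau\Theta-\mu)\phi^2\le c_\tau\int_M\Theta\phi^2$. You need the same H\"older--Sobolev absorption before the Poincar\'e inequality yields $\|\phi-1\|_2^*\lesssim\sqrt{\varepsilon}$.

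Step~4 does not depend on the sign and is fine. Your point that the iteration needs a Sobolev exponent strictly above $2p/(p-1)$ is correct and necessary; you can get it by applying Proposition~\ref{prop:sobolev} with some $p'\in(n/2,p)$, using $\|\Theta\|_{p'}^*\le\|\Theta\|_p^*$.
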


\begin{proof}
We make the transformation $J = W^{-1/(\tau-1)}$, where $W > 0$ is an auxiliary positive function. A direct computation gives
\[
\nabla J = -\frac{1}{\tau-1} W^{-\frac{1}{\tau-1}-1}\nabla W,
\]
and
\[
\Delta J - \tau \frac{|\nabla J|^2}{J} = -\frac{1}{\tau-1} W^{-\frac{1}{\tau-1}-1} \Delta W.
\]
Hence equation \eqref{eq:J} is equivalent to
\[
\Delta W + 2(\tau-1)\Theta W = (\tau-1)\sigma W,
\]
together with Neumann boundary conditions. Set $V := 2(\tau-1)\Theta$ and $\widetilde{\sigma} := (\tau-1)\sigma$. Since $\kappa_{\mathrm{Ein}}(p) \le \varepsilon$, we have $\|V\|_{L^p} \le 2(\tau-1)\varepsilon$. By the global Neumann Sobolev inequality and Neumann Poincar\'e inequality under integral Einstein deviation bounds (see Appendix \ref{app:sobolev}, Proposition \ref{prop:sobolev} and Corollary \ref{cor:poincare}), the Moser iteration argument of Ramos Oliv\'e--Seto--Wei--Zhang \cite{ramos2020zhong} (Proposition 2) applies verbatim with $\rho_K$ replaced by $\Theta$. This yields the existence of $\sigma \ge 0$ and a positive solution $W$ with $\|W - 1\|_{L^\infty} \le \delta$. Consequently, $|J - 1| \le \delta$ for $\delta$ sufficiently small.
\end{proof}

\begin{remark}\label{rem:J-wellposed}
The equation for $W$ is a standard Schr\"odinger-type equation with bounded potential $V \ge 0$, and the parameter $\widetilde{\sigma}$ is a principal eigenvalue of the associated operator. By the Krein--Rutman theorem, the corresponding eigenfunction can be chosen positive on $M$, which ensures the well-posedness of Definition \ref{def:J} and the positivity of $J$.
\end{remark}

\begin{remark}
The closed and $K = 0$ case was proved in \cite{ramos2020zhong}. The same argument extends to the present setting, with $K_E \in \mathbb{R}$ and possibly nonempty convex boundary. For related Sobolev inequalities under integral curvature assumptions in the boundaryless, $K = 0$, or local setting, see \cite{gallot1988isoperimetric, petersen1998integral, dai2018local, WangWei, RamosSeto}. We emphasize that the convexity of the boundary is used only to ensure that minimizing geodesics joining interior points remain in the interior, which is needed in the proof of the Sobolev inequality; see Lemma \ref{lem:convex-geodesics} below.
\end{remark}

\section{Gradient Comparison}\label{sec:gradient}

In this section, we prove the key gradient comparison between the first eigenfunction on $M$ and the first eigenfunction of the one-dimensional model. The comparison is the technical core of the paper, and its proof occupies the entirety of this section. The strategy is to assume, for contradiction, that the gradient of $u$ exceeds the model gradient at some point, and then to derive a contradiction by applying the maximum principle to a carefully chosen auxiliary function $Q$. The auxiliary function $J$ constructed in Lemma \ref{lem:J} is used to absorb the Einstein deviation $\Theta$, which would otherwise obstruct the maximum principle computation.

Figure~\ref{fig:gradient} illustrates the geometric content of the gradient comparison. The claim is that the gradient of the eigenfunction $u$ on $M$, measured with respect to the auxiliary metric $J|\nabla u|^2$, is dominated by the gradient of the one-dimensional model function $w$, composed with the inverse of $w$.

\begin{figure}[ht]
\centering
\begin{tikzpicture}[scale=1.0]
  \begin{scope}[shift={(-4.0,0)}]
    \draw[thick, blue!60!black, fill=blue!5] plot[smooth cycle, tension=0.8] coordinates {(-1.6,0.2) (-1.2,1.1) (0,1.4) (1.2,1.0) (1.6,0.1) (1.1,-1.0) (0,-1.3) (-1.1,-0.9)};
    \foreach \x/\y in {-0.8/0.6, -0.4/0.8, 0.2/0.9, 0.7/0.6, 0.9/0.1, 0.7/-0.5, 0.2/-0.8, -0.4/-0.7, -0.8/-0.3, -0.9/0.2} {
      \draw[->, red!70!black, thick] (\x,\y) -- ++(0.3*\x+0.1, 0.3*\y+0.05);
    }
    \node[blue!60!black, font=\small] at (-1.2,1.1) {$M$};
    \node[red!70!black, font=\scriptsize] at (0,-1.7) {$\nabla u$};
  \end{scope}
  \node[font=\Large] at (-0.5,0) {$\le$};
  \begin{scope}[shift={(2.5,0)}]
    \draw[->, thick] (-1.8,0) -- (1.8,0) node[right] {\small $t$};
    \draw[->, thick] (0,-1.4) -- (0,1.5) node[above] {\small $w$};
    \draw[thick, blue, domain=-1.5:1.5, samples=80] plot (\x, {1.1*cos(180*\x/1.5)});
    \node[blue, font=\small] at (1.7,1.1) {$w$};
    \foreach \x in {-1.2,-0.6,0,0.6,1.2} {
      \pgfmathsetmacro{\y}{1.1*cos(180*\x/1.5)}
      \pgfmathsetmacro{\dy}{-1.1*(180/1.5)*sin(180*\x/1.5)*(3.14159/180)}
      \draw[->, red!70!black, thick] (\x,\y) -- ++(0.25, 0.25*\dy);
    }
    \node[red!70!black, font=\scriptsize] at (0,-1.7) {$w'$};
  \end{scope}
  \node[font=\small, text width=8cm, align=center] at (-0.5,-2.5) {$J|\nabla u|^2 \le (w')^2 \circ (w^{-1}(u))$};
\end{tikzpicture}
\caption{The geometric content of the gradient comparison. The gradient of the eigenfunction $u$ on $M$, measured with respect to the auxiliary metric $J|\nabla u|^2$, is dominated by the gradient of the one-dimensional model function $w$, composed with the inverse of $w$. This comparison is the key to translating the one-dimensional estimate into a statement about $M$.}
\label{fig:gradient}
\end{figure}

\begin{theorem}\label{thm:gradient}
For every $\delta > 0$, there exist parameters $\tau > 1$, $\overline{n} > n$, and a constant $\varepsilon_0 = \varepsilon_0(n, p, D, \delta, R_E) > 0$ with the following property. If $\kappa_{\mathrm{Ein}}(p) < \varepsilon_0$, let $J$ be the positive solution of \eqref{eq:J} with $|J - 1| \le \delta$ and associated constant $\sigma$. Define $\overline{K}$ by
\[
\overline{K} = \begin{cases}
(1-\delta)K_E\frac{n-1}{\overline{n}-1} - (1+\delta)\frac{\sigma}{\overline{n}-1}, & K_E \ge 0, \\[6pt]
(1+\delta)K_E\frac{n-1}{\overline{n}-1} - (1+\delta)\frac{\sigma}{\overline{n}-1}, & K_E < 0,
\end{cases}
\]
where $K_E = R_E/(n(n-1))$ and $\sigma$ is the constant associated with $J$ as in Lemma \ref{lem:J}. Let $T$ be a solution of
\[
T' = \frac{T^2}{\overline{n}-1} + (\overline{n}-1)\overline{K},
\]
and let $w = w^{\overline{\lambda}}_{T,a}$ be the solution of the one-dimensional initial value problem \eqref{eq:ivp} on an interval $[a,b]$ such that $w' \ge 0$ on $[a,b]$, where
\[
\overline{\lambda} = (1+2\delta)\lambda_1.
\]
Assume that $[-1, u^*] \subset [-1, w(b)]$. Then
\begin{equation}\label{eq:gradient-comparison}
J|\nabla u|^2 \le (w')^2 \circ (w^{-1}(u)).
\end{equation}
\end{theorem}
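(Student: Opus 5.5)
We argue by contradiction with a maximum principle, following Bakry--Qian and Ramos Oliv\'e--Seto--Tuerkoen \cite{RamosSetoTuerkoen}. First we reduce to the strict situation. We replace $w$ by a slightly perturbed solution, for instance shifting the starting point $a$ or scaling $u$ by a factor $c<1$, so that $[-1,u^*]$ lies in the open interior $(w(a),w(b))$ and $w'>0$ there. The inverse $\psi:=w^{-1}$ is then smooth on a neighbourhood of $u(M)$. We set $\phi(x):=(w')^2\circ\psi$, regarded as a function of $s=u$, and define
\[
Q:=J|\nabla u|^2-\phi(u)\qquad\text{on } M.
\]
If \eqref{eq:gradient-comparison} failed, then after rescaling $u\mapsto cu$ with $c\le 1$ we could arrange $\max_M Q=0$, attained at some $x_0$ with $\nabla u(x_0)\ne0$. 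We then let $c\to1$ at the end.

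\textbf{Boundary exclusion.} If $x_0\in\partial M$, we have $\partial_\nu J=0$, $\partial_\nu u=0$, and convexity gives $\partial_\nu|\nabla u|^2=2\,\Hess u(\nabla u,\nu)=-2\,\mathrm{II}(\nabla u,\nabla u)\le0$. Hence $\partial_\nu Q\le0$. A Hopf-lemma argument applied to the elliptic inequality below rules this case out. We may therefore take $x_0$ interior, where $\nabla Q=0$ and $\Delta Q\le0$.

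\textbf{Main computation at $x_0$.} Write $v=\nabla u/|\nabla u|$. The condition $\nabla Q=0$ gives
\[
2J\,\Hess u(\nabla u,\cdot)+|\nabla u|^2\nabla J=\phi'\nabla u,
\]
which expresses $A_u$ in terms of $\phi'$ and $\nabla J\cdot v$. We then expand $\Delta Q$ using the Bochner formula with
\[
\Ric(\nabla u,\nabla u)\ge\big((n-1)K_E-\Theta\big)|\nabla u|^2,
\]
which holds because $\theta\ge-|\theta|$ and $\mathring{\Ric}\ge-\eta$. The refined Kato/dimension inequality reads
\[
|\Hess u|^2\ge A_u^2+\frac{(\Delta u-A_u)^2}{n-1}.
\]
In the cross term $2\langle\nabla J,\nabla|\nabla u|^2\rangle$ we substitute $\nabla|\nabla u|^2$ from the critical point equation. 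The terms quadratic in $\nabla J$ are controlled by $\tau|\nabla J|^2/J$ from \eqref{eq:J}, using Young's inequality with a loss that fixes $\tau$ and $\overline n>n$ (the loss coefficient is absorbed into $(\overline n-1)^{-1}$ versus $(n-1)^{-1}$). The key cancellation is that the $-2J\Theta|\nabla u|^2$ term from Ricci is exactly compensated by $\Delta J\,|\nabla u|^2\supset 2J\Theta|\nabla u|^2$ from \eqref{eq:J}. What remains is $-\sigma J|\nabla u|^2$. Together with $|J-1|\le\delta$ and $(n-1)K_E(1\mp\delta)$, this produces exactly the $\overline K$ in the statement. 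Using $\Delta u=-\lambda_1u$ and $\overline\lambda=(1+2\delta)\lambda_1$ (the factor $2\delta$ absorbs $J^{\pm1}$ errors in $\lambda_1 u$ terms), we arrive at
\[
0\ge\Delta Q\ge c\,|\nabla u|^2\cdot\mathcal{E}(s),
\]
where $\mathcal E$ is the expression which vanishes identically for the model. This follows from the ODE: $w''=Tw'-\overline\lambda w$ together with the Riccati equation \eqref{eq:riccati}, differentiated, gives $\phi''$, $\phi'$ relations with $T=T(\psi(s))$. Choosing the minimizing value of the Cauchy--Schwarz splitting as in Bakry--Qian turns $\mathcal E\ge0$ into a strict positivity, contributed by the strict perturbation introduced in the first step. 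This is a contradiction.

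\textbf{Expected difficulty.} The main obstacle is the bookkeeping of the $\nabla J$ terms. They must be bounded by $\tau|\nabla J|^2/J$ with only an $O(\delta)$ loss in dimension and curvature, while keeping the sign of the coefficient of $A_u^2$ correct. At this step I would first try the substitution of Ramos Oliv\'e--Seto--Tuerkoen: write $\Delta u-A_u$ via $\lambda_1 u$ and $A_u$, then apply $(a+b)^2\ge\frac{a^2}{1+\mu}-\frac{b^2}{\mu}$ with $\mu$ tied to $\overline n-n$. We choose $\tau$ large enough that $\tau-1$ dominates the resulting $1/\mu$ coefficient, and then take $\varepsilon_0$ from Lemma \ref{lem:J} for that $\tau$ and $\delta$.
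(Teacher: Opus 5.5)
\textbf{Where the plan breaks.} Your overall architecture is the paper's. You run a touching argument for $Q=J|\nabla u|^2-\phi(u)$ and use Bochner with $\Ric(\nabla u,\nabla u)\ge((n-1)K_E-\Theta)|\nabla u|^2$. The $-2J\Theta|\nabla u|^2$ term cancels against the $2J\Theta$ supplied by \eqref{eq:J}, and a large $\tau$ swallows the $|\nabla J|^2$ terms, so only $-\sigma J|\nabla u|^2$ survives, which is what produces $\overline K$. The boundary case via $\partial_\nu Q=-2J\,\mathrm{II}(\nabla u,\nabla u)\le 0$ is also fine.

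The gap is in the last step, which is where the real work lies. After inserting $w''=Tw'-\overline\lambda w$ and the Riccati equation, what remains does not ``vanish for the model''. Since $J$ is not constant, $\lambda_1$ enters through $y:=J\lambda_1/\overline\lambda$. With $x:=\overline\lambda w$ and $v:=Tw'$ one arrives at \eqref{eq:step4},
\[
0\ge \frac{1}{J(n-1)}\bigl(Px^2-Sxv+Rv^2\bigr)+\Bigl((n-1)K_E-\sigma-\frac{(\overline n-1)\overline K}{J}+\frac{\overline\lambda}{J}-\lambda_1\Bigr)(w')^2 .
\]
To leading order, $P\approx(1-y)(n-y)$, $S\approx(n+1)(1-y)$, and $R\approx\frac{\overline n-n}{\overline n-1}$ minus the Young losses.

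You need $y<1$ at every point, since otherwise $P<0$; this is why $\overline\lambda=(1+2\delta)\lambda_1$. But then $S$ is nonzero and of order $\delta$, and the $(w')^2$ remainder cannot absorb it. Take the singular models used in Section~\ref{sec:max}, where $T\sim-(\overline n-1)/(t-a)$. As $t\to a$ one has $w'\to0$, $x\to-\overline\lambda$ and $v\to-\frac{\overline n-1}{\overline n}\overline\lambda$, so $Px^2-Sxv\approx-\frac{n-1}{n}(1-y)\overline\lambda^2<0$.

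So you need $R>0$ with $4PR\ge S^2$. This means a dimension slack of at least about $\frac{(n+1)^2}{4(n-1)}(1-y)$, a definite multiple of $\delta$. In your plan, the slack $\overline n-n$ is exactly what pays for the Young splitting of the $\nabla J$ terms: $\mu$ is ``tied to $\overline n-n$'' and the loss is ``absorbed into $(\overline n-1)^{-1}$ versus $(n-1)^{-1}$''. That leaves $R\approx 0$, and no contradiction follows.

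\textbf{How the paper avoids this.} It separates the scales. The Young parameters are second order ($\alpha=\delta_1^2$, $\beta=\delta_1^2/n$, hence $\tau\sim\delta_1^{-2}$). The dimension slack is first order, $\frac{\overline n-n}{\overline n-1}=\eta_0\delta_1$ with $\eta_0=\frac{(n+1)^2}{n-1}$. Then $R=\delta_1(\eta_0-\delta_1)$ survives and $4PR\ge S^2$ can be verified.

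\textbf{Source of strictness.} Strictness does not come from your preliminary perturbation. Rescaling $u\mapsto cu$ or moving $a$ leaves the touching-point inequality unchanged, because $u$ and $w$ solve linear equations with the same $T$. That step only keeps $w^{-1}$ smooth away from the endpoints.

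The contradiction instead comes from the bracketed coefficient above being strictly positive. This follows from $\overline\lambda>J\lambda_1$ and the choice of $\overline K$. That coefficient is multiplied by $(w')^2>0$ at the touching point, once the quadratic form is known to be nonnegative.
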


\begin{proof}
We first consider the case $\partial M = \emptyset$. By contradiction, assume that
\[
J|\nabla u|^2 > (w'(w^{-1}(u(x))))^2
\]
at some point $x \in M$. For $c > 1$, define the auxiliary function
\[
Q = J|\nabla u|^2 - (cw')^2((cw)^{-1}(u(x))).
\]
Under the contradiction hypothesis, the function $c \mapsto Q_c(\overline{x}_c)$, where $\overline{x}_c$ denotes a maximum point of $Q_c$, satisfies $Q_1(\overline{x}_1) > 0$; moreover, $Q_c(\overline{x}_c) \to -\infty$ as $c \to \infty$. By continuity, there exists $c > 1$ such that $Q(\overline{x}) = 0$ at the maximal point $\overline{x}$, i.e.,
\[
J(\overline{x})|\nabla u|^2(\overline{x}) = (cw')^2((cw)^{-1}(u(\overline{x}))).
\]
At that point $\overline{x}$, we have $\nabla Q(\overline{x}) = 0$ and $\Delta Q(\overline{x}) \le 0$. In the sequel, we set $c = 1$; the general case $c > 1$ follows by the same argument after rescaling, and the choice $c = 1$ suffices for the desired contradiction.

\textbf{Step A: First-order conditions.} From $\nabla Q(\overline{x}) = 0$, we obtain
\[
\Hess u(\nabla u, \cdot) = c\frac{w''}{J}\nabla u - \frac{|\nabla u|^2}{2J}\nabla J,
\]
and hence
\[
A_u = c\frac{w''}{J} - \frac{1}{2J}\langle \nabla J, \nabla u\rangle,
\]
where $\langle \cdot, \cdot \rangle$ denotes the Riemannian inner product on tangent vectors. To verify the first identity, we compute $\nabla Q = (\nabla J)|\nabla u|^2 + 2J\,\Hess u(\nabla u,\cdot) - \psi'(u)\nabla u$, where $\psi(s) := (cw')^2((cw)^{-1}(s))$. Since $\psi'(s) = 2cw''(t)$ with $t = (cw)^{-1}(s)$, we obtain the stated identity.

\textbf{Step B: Second-order condition.} Using $\Delta Q(\overline{x}) \le 0$ together with the first-order conditions, we get
\begin{align}
0 &\ge \frac{1}{2}(\Delta J)|\nabla u|^2 + 2\frac{cw''}{J}\langle \nabla J, \nabla u\rangle - \frac{|\nabla J|^2}{J}|\nabla u|^2 + \frac{1}{2}J\Delta|\nabla u|^2 \nonumber \\
&\quad - c^2 w' w''' J^{-1} + c\lambda_1 w'' u. \label{eq:step1}
\end{align}
This follows from expanding $\Delta Q$ and using $|\nabla u|^2 = (cw')^2/J$ at the maximum point to rewrite the term $-\frac{w'''}{w'}|\nabla u|^2$ as $-c^2 w'w'''J^{-1}$.

\textbf{Step C: Bochner estimate.} From the decomposition
\[
\Ric = (n-1)K_E g + \theta g + \mathring{\Ric},
\]
we have
\[
\Ric(\nabla u, \nabla u) \ge \left[(n-1)K_E - \Theta\right]|\nabla u|^2,
\]
where $\Theta = |\mathring{\Ric}| + |\theta|$. Together with $\Delta u = -\lambda_1 u$, this gives
\begin{equation}\label{eq:bochner}
\frac{1}{2}\Delta|\nabla u|^2 \ge |\Hess u|^2 + \left[(n-1)K_E - \Theta - \lambda_1\right]|\nabla u|^2.
\end{equation}
For the Hessian term, we use the refined Cauchy--Schwarz inequality:
\begin{equation}\label{eq:hessian}
|\Hess u|^2 \ge \frac{\lambda_1^2 u^2}{n-1} + \frac{n}{n-1}A_u^2 + \frac{2\lambda_1 u}{n-1}A_u.
\end{equation}
The verification of \eqref{eq:hessian} proceeds by expanding $|\Hess u|^2 \ge \frac{(\Delta u)^2}{n} + \frac{n}{n-1}(A_u - \frac{\Delta u}{n})^2$ and substituting $\Delta u = -\lambda_1 u$.

\textbf{Step D: Reduction to a quadratic form.} Substituting \eqref{eq:bochner} and \eqref{eq:hessian} into \eqref{eq:step1} and applying the first-order condition, we obtain
\begin{align}
0 &\ge \frac{1}{2}\left(\Delta J - 2\frac{|\nabla J|^2}{J} - 2\Theta J\right)|\nabla u|^2 + \frac{(n-2)cw''}{(n-1)J}\langle \nabla u, \nabla J\rangle \nonumber \\
&\quad + \frac{n}{4J(n-1)}\langle \nabla J, \nabla u\rangle^2 + \frac{(n+1)\lambda_1 c^2 w'' w}{n-1} + \frac{J\lambda_1^2(cw)^2}{n-1} \nonumber \\
&\quad - \frac{\lambda_1 c w}{n-1}\langle \nabla J, \nabla u\rangle + \left((n-1)K_E - \lambda_1\right)(cw')^2 \nonumber \\
&\quad + c^2 J^{-1}\left(\frac{n}{n-1}(w'')^2 - w' w'''\right). \label{eq:step2}
\end{align}
We bound the mixed terms using Young's inequality with parameters $\alpha, \beta > 0$:
\begin{align}
-\frac{\lambda_1 c w}{n-1}\langle \nabla J, \nabla u\rangle &\ge -\alpha J\frac{\lambda_1^2 c^2 w^2}{n-1} - \frac{|\nabla J|^2|\nabla u|^2}{4\alpha(n-1)J}, \label{eq:young1} \\[4pt]
\frac{n-2}{n-1}\frac{cw''}{J}\langle \nabla J, \nabla u\rangle &\ge -\frac{n\beta}{n-1}\frac{c^2(w'')^2}{J} - \frac{(n-2)^2}{4\beta n(n-1)}\frac{|\nabla u|^2|\nabla J|^2}{J}. \label{eq:young2}
\end{align}
Applying \eqref{eq:young1} and \eqref{eq:young2} to \eqref{eq:step2} and using the equation satisfied by $J$,
\[
\frac{1}{2}\left(\Delta J - \left(2 + \frac{(n-2)^2}{2\beta n(n-1)} + \frac{1}{2(n-1)\alpha}\right)\frac{|\nabla J|^2}{J} - 2\Theta J\right)|\nabla u|^2 = -\sigma J|\nabla u|^2,
\]
together with the fact that $Q(\overline{x}) = 0$ implies $J|\nabla u|^2 = (cw')^2$, we arrive at
\begin{align}
0 &\ge J(1-\alpha)\frac{\lambda_1^2 c^2 w^2}{n-1} + \frac{n}{n-1}(1-\beta)\frac{c^2(w'')^2}{J} \nonumber \\
&\quad + \left((n-1)K_E - \lambda_1 - \sigma\right)(cw')^2 - c^2 w''' w' J^{-1} + \frac{n+1}{n-1}\lambda_1 c^2 w'' w. \label{eq:step3}
\end{align}
Using the one-dimensional model equation \eqref{eq:ivp}, we express $w'''w'$, $(w'')^2$, and $w''w$ in terms of $T$, $\overline{\lambda}$, $\overline{K}$, and $\overline{n}$. Specifically, from $w'' = Tw' - \overline{\lambda}w$ and the Riccati equation \eqref{eq:riccati}, we obtain
\[
w'''w' = \left(\frac{\overline{n}}{\overline{n}-1}T^2 + (\overline{n}-1)\overline{K} - \overline{\lambda}\right)(w')^2 - \overline{\lambda}Tw'w,
\]
\[
(w'')^2 = T^2(w')^2 - 2\overline{\lambda}Tw'w + \overline{\lambda}^2w^2, \qquad w''w = Tw'w - \overline{\lambda}w^2.
\]
Setting $y := J\lambda_1/\overline{\lambda}$, we obtain
\begin{equation}\label{eq:step4}
0 \ge \frac{1}{J(n-1)}Q_0(u, v) + \left((n-1)K_E - \lambda_1 - \sigma - (\overline{n}-1)\frac{\overline{K}}{J} + \frac{\overline{\lambda}}{J}\right)(w')^2,
\end{equation}
where we have renamed the quadratic form as $Q_0$ to avoid confusion with the maximum principle function $Q$, and where $u = \overline{\lambda}w$, $v = Tw'$, and
\[
Q_0(u, v) = P u^2 - S uv + R v^2,
\]
with
\[
P = (1-\alpha)y^2 - (n+1)y + n(1-\beta), \quad S = n+1 - 2n\beta - y(n+1), \quad R = \frac{\overline{n}-n}{\overline{n}-1} - n\beta.
\]
We note that the last term in \eqref{eq:step4} is obtained from the $(w')^2$ terms in \eqref{eq:step3} by factoring out $J^{-1}$ and using $J|\nabla u|^2 = (cw')^2$; this ensures that all terms in \eqref{eq:step4} have the same dimensional scaling. The sign of the $\overline{\lambda}/J$ term is positive, since it originates from the $-\overline{\lambda}(w')^2$ contribution in $w'''w'$ after multiplication by $-c^2 J^{-1}$.

\textbf{Step E: Positivity of $Q_0$ and of the remaining term.} Choose
\[
\alpha = \delta_1^2, \quad \beta = \frac{\delta_1^2}{n}, \quad \overline{n} = \frac{n - \eta_0\delta_1}{1 - \eta_0\delta_1}, \quad \eta_0 = \frac{(n+1)^2}{n-1}, \quad \delta_1 = \frac{\delta}{1+2\delta}.
\]
Then $P \ge 0$, $R \ge 0$, and $4PR \ge S^2$ for $\delta > 0$ sufficiently small. The positivity of $R$ follows from the explicit computation
\[
R = \eta_0\delta_1 - n\beta = \delta_1(\eta_0 - \delta_1) \ge 0
\]
for $\delta_1 < \eta_0$, where we used $\frac{\overline{n}-n}{\overline{n}-1} = \eta_0\delta_1$. The positivity of $P$ and of $4PR - S^2$ is verified as in \cite{RamosSetoTuerkoen} (Proof of Theorem 3.1).
With the choice of $\overline{K}$ as in the statement of the theorem, we have
\[
(n-1)K_E - (\overline{n}-1)\frac{\overline{K}}{J} - \sigma + \frac{\overline{\lambda}}{J} - \lambda_1 > 0
\]
for $\delta$ sufficiently small. To see this, we distinguish the cases $K_E \ge 0$ and $K_E < 0$. In the former case, the choice of $\overline{K}$ ensures that $(\overline{n}-1)\overline{K} \le (1-\delta)(n-1)K_E$, and hence the left-hand side is bounded below by $\delta(n-1)K_E - \sigma$, which is positive for $\delta$ sufficiently small and $\sigma \le 4\varepsilon$ with $\varepsilon$ sufficiently small. The latter case is analogous, with $(1+\delta)$ in place of $(1-\delta)$.
Combining these two positivity results yields a contradiction to \eqref{eq:step4}, completing the proof for $\partial M = \emptyset$. The case $\partial M \neq \emptyset$ follows from the convexity of the boundary, as in \cite{RamosSetoTuerkoen} (Lemma 3.4): at a boundary maximum point, the Neumann boundary condition ensures that the normal derivative of $Q$ vanishes, and the convexity of the boundary implies that the second fundamental form term has the correct sign.
\end{proof}

\begin{remark}
For $n \ge 3$, the maximum principle proof works without showing that the quadratic form is positive semi-definite. This simplification is possible because the additional dimensions provide enough room to absorb the mixed terms directly, without the need for the quadratic form analysis. We refer to \cite{RamosSetoTuerkoen} (Remark 3.3) for the details.
\end{remark}

\section{Maximum of Eigenfunctions}\label{sec:max}

In this section, we establish two results concerning the maximum of the first eigenfunction $u$. The first, Theorem \ref{thm:maxcomp}, asserts that the maximum $u^* = \max_M u$ of $u$ is bounded below by the maximum of a suitable one-dimensional model function. The second, Theorem \ref{thm:maxmatch}, asserts that the one-dimensional model can be chosen so that its maximum and minimum coincide with those of $u$. Together, these results provide the link between the eigenvalue problem on $M$ and the one-dimensional model problem, which is exploited in Section \ref{sec:proof} to complete the proof of Theorem \ref{thm:main}.

Figure~\ref{fig:maxmatch} illustrates the maximum matching theorem. The one-dimensional model function $w$ is chosen so that its maximum coincides with the maximum $u^*$ of the eigenfunction $u$, and its minimum coincides with the minimum $-1$ of $u$. This matching is achieved by continuously varying the starting point of the one-dimensional initial value problem.

\begin{figure}[ht]
\centering
\begin{tikzpicture}[scale=1.0]
  \begin{scope}[shift={(-4.5,0)}]
    \draw[->, thick] (-1.8,0) -- (1.8,0) node[right] {\small $s$};
    \draw[->, thick] (0,-1.4) -- (0,1.6) node[above] {\small $u$};
    \draw[thick, blue, domain=-1.5:1.5, samples=80] plot (\x, {1.1*cos(180*\x/1.5)});
    \draw[dashed, red] (-1.8,1.1) -- (1.8,1.1);
    \draw[dashed, red] (-1.8,-1.1) -- (1.8,-1.1);
    \node[red, right, font=\scriptsize] at (1.8,1.1) {$u^*$};
    \node[red, right, font=\scriptsize] at (1.8,-1.1) {$-1$};
    \node[font=\small] at (0,-1.8) {eigenfunction $u$ on $M$};
  \end{scope}
  \node[font=\Large] at (-1.0,0) {$\longleftrightarrow$};
  \begin{scope}[shift={(3.0,0)}]
    \draw[->, thick] (-1.8,0) -- (1.8,0) node[right] {\small $t$};
    \draw[->, thick] (0,-1.4) -- (0,1.6) node[above] {\small $w$};
    \draw[thick, green!55!black, domain=-1.5:1.5, samples=80] plot (\x, {1.1*cos(180*\x/1.5)});
    \draw[dashed, red] (-1.8,1.1) -- (1.8,1.1);
    \draw[dashed, red] (-1.8,-1.1) -- (1.8,-1.1);
    \node[red, right, font=\scriptsize] at (1.8,1.1) {$u^*$};
    \node[red, right, font=\scriptsize] at (1.8,-1.1) {$-1$};
    \node[font=\small] at (0,-1.8) {model function $w$};
  \end{scope}
\end{tikzpicture}
\caption{The maximum matching theorem. The one-dimensional model function $w$ is chosen so that its maximum coincides with the maximum $u^* := \max_M u$ of the eigenfunction $u$, and its minimum coincides with the minimum $-1$ of $u$. This matching is achieved by continuously varying the starting point $a$ of the one-dimensional initial value problem \eqref{eq:ivp}.}
\label{fig:maxmatch}
\end{figure}

\subsection{Maximum comparison}

Let $\delta > 0$ be fixed, and let $\varepsilon > 0$, $\overline{n}$, $\overline{K}$, $\overline{\lambda}$ be as in Theorem \ref{thm:gradient}. In the case $\overline{K} > 0$, we choose it to be possibly slightly smaller but still positive:
\[
\overline{K} = \min\left\{\frac{nK_E(1 - C(n,p)\varepsilon)}{\overline{n}}, (1-\delta)K_E\frac{n-1}{\overline{n}-1} - (1+\delta)\frac{\sigma}{\overline{n}-1}\right\},
\]
where $C(n,p) > 0$ is the constant appearing in Aubry's estimate \cite{aubry2007finiteness}. The presence of the first term ensures that the model eigenvalue $\overline{\lambda}$ exceeds the threshold $\overline{n}\overline{K}$ required for the existence of the interval length $d_{\overline{n},\overline{K},\overline{\lambda}}$.

Define
\[
\overline{T}_{\overline{n},\overline{K}}(t) = \begin{cases}
(\overline{n}-1)\sqrt{\overline{K}}\tan(\sqrt{\overline{K}}t), & \overline{K} > 0, \\[2pt]
-(\overline{n}-1)\sqrt{-\overline{K}}\coth(\sqrt{-\overline{K}}t), & \overline{K} < 0.
\end{cases}
\]
We let $w = w^{\overline{\lambda}}_{\overline{T}_{\overline{n},\overline{K}},a}$ be the solution to the initial value problem \eqref{eq:ivp} with $a = 0$ if $\overline{K} < 0$ and $a = -\pi/(2\sqrt{\overline{K}})$ if $\overline{K} > 0$. We denote
\[
m_{\overline{n},\overline{K}} := w(a + d(\overline{T}_{\overline{n},\overline{K}}, a))
\]
the maximum value of $w$ on the interval $[a, a + d(\overline{T}_{\overline{n},\overline{K}}, a)]$.

\begin{theorem}\label{thm:maxcomp}
Let $p > n/2$ and $n \ge 2$. For every $\delta > 0$, there exists $\varepsilon > 0$ such that whenever $\kappa_{\mathrm{Ein}}(p) < \varepsilon$, we have
\[
u^* \ge m_{\overline{n},\overline{K}}.
\]
\end{theorem}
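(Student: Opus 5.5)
The plan follows the maximum comparison argument of Bakry--Qian as adapted in \cite{RamosSetoTuerkoen}: argue by contradiction, feed the gradient comparison into a volume-type integral identity, and use the reference Einstein curvature $K_E$ only through the choice of $\overline{K}$ and through the Aubry bound. Suppose $u^* < m_{\overline{n},\overline{K}}$. Let $w$ be the model solution on $[a,b]$, $b = a + d(\overline{T}_{\overline{n},\overline{K}},a)$, with $w(a) = -1$, $w' \ge 0$, $w(b) = m_{\overline{n},\overline{K}}$. Since $[-1,u^*] \subset [-1,w(b)]$, Theorem~\ref{thm:gradient} applies and gives
\[
J|\nabla u|^2 \le (w')^2\circ w^{-1}(u).
\]
Set $t(x) := w^{-1}(u(x)) \in [a, b_0]$, where $w(b_0) = u^* < w(b)$ and so $b_0 < b$. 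Then $J|\nabla t|^2 \le 1$, and with $|J-1| \le \delta$ this gives $|\nabla t| \le (1-\delta)^{-1/2}$.

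\textbf{Distribution of $u$.} Consider the pushforward measure $\mu$ of $dV$ under $t$ on $[a,b_0]$. The goal is to show that $\mu$ has density comparable to the model weight
\[
e^{-\int T}, \qquad \text{which is } \cos^{\overline{n}-1}(\sqrt{\overline{K}}\,t) \text{ (resp. } \sinh^{\overline{n}-1}\text{)}.
\]
The standard route is to integrate $\Delta u = -\lambda_1 u$ over a superlevel set $\{u > w(s)\}$, using the Neumann condition:
\[
\lambda_1\int_{\{u>w(s)\}} u \,dV = \int_{\{u=w(s)\}} |\nabla u|.
\]
The gradient bound then controls the right-hand side by $w'(s)/\sqrt{J}$ times the level-set area. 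The co-area formula turns this into a differential inequality comparing $G(s) := \int_{\{u>w(s)\}} u$ with the model quantity $\int_s^{b} w\, e^{-\int T}$. Because $\overline{\lambda} = (1+2\delta)\lambda_1$ and $J \ge 1-\delta$, the $\delta$-losses go in the right direction, exactly as in \cite{RamosSetoTuerkoen}.

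\textbf{Contradiction at the top.} Near $t = b$ the model quantity vanishes to a definite order; in the case $\overline{K}>0$, $a = -\pi/(2\sqrt{\overline{K}})$, and the weight degenerates like $(b-t)^{\overline{n}-1}$-type at the endpoints. Meanwhile $u$ reaches only $w(b_0)$ with $b_0 < b$. Comparing the ratio $G(s)/(\text{model})$, which is monotone by the differential inequality, at $s = a$ (where both integrals vanish by $\int u = 0$) and as $s \to b_0$ yields $\int_{\{u > w(s)\}} u\,dV < 0$ for some $s$ with $w(s) > 0$, which is absurd.

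\textbf{Main obstacle.} The hardest step is ensuring that the model quantities are well defined. In the case $\overline{K} > 0$ this requires $\overline{\lambda} > \overline{n}\,\overline{K}$, so that $d$ is finite and $w$ reaches its maximum before the singularity of $\tan$. This is exactly where Aubry's estimate \cite{aubry2007finiteness} is used. Since $\Ric \ge (n-1)K_E - \Theta$, the function $\rho_{K_E}$ is bounded by $\Theta$, so
\[
\kappa(p,K_E) \le \kappa_{\mathrm{Ein}}(p) < \varepsilon,
\]
and hence $\lambda_1 \ge nK_E(1 - C(n,p)\varepsilon)$. The first term in the minimum defining $\overline{K}$ then gives
\[
\overline{\lambda} > \lambda_1 \ge \overline{n}\,\overline{K}.
\]
I expect the sign-bookkeeping there, and the verification that the monotone ratio argument survives the factor $J$ (absorbed via $|J-1| \le \delta$ by shrinking $\varepsilon$), to need the most care. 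The $\overline{K} < 0$ case uses $\coth$ with $a = 0$ and goes through identically without the Aubry input.
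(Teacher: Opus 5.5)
Your setup is sound and matches the paper: the contradiction hypothesis, applying Theorem~\ref{thm:gradient} to the singular model, the level-set inequality, and the Aubry bookkeeping giving $\overline{\lambda}>\overline{n}\,\overline{K}$. The level-set inequality needs the level integral written as $\int|\nabla u|^2|\nabla u|^{-1}$, so that co-area produces the pushforward measure $m$, rather than a bound by level-set area. There is, however, a genuine gap: the step that should produce the contradiction does not work.

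For $\overline{K}>0$ the model weight $\mu_{\overline{n},\overline{K}}=\cos^{\overline{n}-1}(\sqrt{\overline{K}}\,t)$ degenerates only at the singular starting point $a=-\pi/(2\sqrt{\overline{K}})$. At $b$, which lies strictly before $\pi/(2\sqrt{\overline{K}})$ because $\overline{\lambda}>\overline{n}\,\overline{K}$, the weight is positive. So $\int_s^b w\,\mu_{\overline{n},\overline{K}}\,dt$ vanishes only linearly as $s\to b$.

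More importantly, on $[t_0,b)$, where $w\ge 0$, your inequality $(1-\delta)^{-1}w'\,dm\ge \lambda_1\bigl(\int_{\{u>w(s)\}}u\,dV\bigr)\,ds$ gives exactly that
\[
s\longmapsto \frac{\int_{\{u>w(s)\}}u\,dV}{\bigl(\int_s^b w\,\mu_{\overline{n},\overline{K}}\,dt\bigr)^{q}},\qquad q=\frac{(1-\delta)\lambda_1}{\overline{\lambda}},
\]
is \emph{decreasing}. That is only an upper bound on the mass of $u$ near its maximum. The ratio drops to $0$ at $b_0$ because $u^*=w(b_0)<w(b)$, which is entirely consistent with it decreasing. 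Nothing forces $\int_{\{u>w(s)\}}u\,dV$, which is trivially nonnegative when $w(s)>0$, to become negative. So no contradiction arises at the top.

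The contradiction must come from the bottom, $s\to a$, and it needs two ingredients you do not use.
\begin{itemize}
\item On $(a,t_0]$, the same monotonicity (now with $w\le 0$) gives $\int_{\{u\le w(s)\}}(-u)\,dV\le C\bigl(\int_a^s(-w)\,\mu_{\overline{n},\overline{K}}\,dt\bigr)^q$. Since $\mu_{\overline{n},\overline{K}}(t)$ behaves like $(t-a)^{\overline{n}-1}$, this yields $\vol\{u\le w(s)\}\le C'(s-a)^{\overline{n}q}$ for $s$ near $a$.
\item Integrating the gradient comparison along unit-speed minimizing geodesics from a minimum point $x_0$ (the paper's Lemma~\ref{lem:ball}) shows that $\{u<w(s)\}$ contains $B(x_0,\sqrt{1-\delta}\,(s-a))$. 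That ball has volume at least $c(s-a)^n$ for small $s-a$.
\end{itemize}
These two bounds are incompatible exactly when $\overline{n}q>n$. This is why $\overline{n}$ is taken strictly larger than $n$: with $\overline{n}=(n-\eta_0\delta_1)/(1-\eta_0\delta_1)$ and $q=1-3\delta_1$, one checks $\overline{n}q>n$ using $(n+1)^2-3n>0$.

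You note that both of your quantities vanish at $s=a$, but what carries the argument is their rates of vanishing there, compared via the small-ball volume bound and $\overline{n}q>n$. The hypothesis $u^*<m_{\overline{n},\overline{K}}$ is needed only so that the gradient comparison with this singular model holds on all of $[-1,u^*]$.
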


\begin{proof}[Proof of Theorem \ref{thm:maxcomp}, Part 1: Setup]
The proof follows the strategy of Bakry--Qian \cite{bakry2000some} and Naber--Valtorta \cite{naber2014sharp}, with modifications for the integral Einstein deviation. We assume for contradiction that $u^* < m_{\overline{n},\overline{K}}$.

Let $t_0 \in (a, b)$ denote the unique zero of $w$. Let $\varphi := w^{-1} \circ u$, so that $\varphi$ is a smooth function on $M$ with values in $[a, b]$; we use the symbol $\varphi$ rather than $g$ in order to avoid any conflict with the metric tensor $g$. Define the measure $m$ on $[a, b]$ by
\[
m(A) := \vol(\varphi^{-1}(A)), \quad A \subset [a, b] \text{ Borel},
\]
where $\vol(\cdot)$ denotes the Riemannian volume of a measurable subset of $M$. This implies that for any bounded and measurable function $f$ on $[a, b]$,
\[
\int_a^b f(s)\,dm(s) = \int_M f(\varphi(x))\,dV(x).
\]
The measure $m$ encodes the distribution of the values of $u$ relative to the model function $w$, and its properties are central to the argument.
\end{proof}

\begin{proposition}\label{prop:monotone}
Assume that $1 - \delta < J < 1 + \delta$, and $u$ and $w$ satisfy the conditions of Theorem \ref{thm:gradient}. Then the function
\[
E(s) := -\exp\left((1-\delta)\lambda_1\int_{t_0}^s \frac{w(t)}{w'(t)}dt\right)\int_a^s w(r)\,dm(r)
\]
is increasing on $(a, t_0]$ and decreasing on $[t_0, b)$.
\end{proposition}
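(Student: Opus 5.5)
The idea is to compute $E'$ and reduce its sign to a differential inequality for $F(s):=\int_a^s w\,dm$. Writing $h(s)=(1-\delta)\lambda_1\int_{t_0}^s \frac{w}{w'}\,dt$, we have $E=-e^{h}F$, and hence
\[
E'(s) = -e^{h(s)}\Bigl((1-\delta)\lambda_1\frac{w(s)}{w'(s)}F(s) + F'(s)\Bigr).
\]
Here $F'(s)\,ds$ is the measure $w(s)\,dm(s)$. Since $w<0$ on $(a,t_0)$ and $w>0$ on $(t_0,b)$, while $w'>0$ on the interior, the claim reduces to an inequality of the form
\[
(1-\delta)\lambda_1 \frac{w(s)}{w'(s)}\int_a^s w\,dm \;\le\; -\frac{d}{ds}\int_a^s w\,dm \quad\text{on }(a,t_0],
\]
with the reversed inequality on $[t_0,b)$. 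Because $m$ may not be absolutely continuous, I will work in distributional form, or equivalently integrate against test functions. The plan is to derive this inequality from the eigenvalue equation combined with the gradient comparison of Theorem \ref{thm:gradient}.

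Step 1 is the co-area/divergence identity. Fix $s\in(a,b)$ and let $\Omega_s=\{\varphi<s\}=\{u<w(s)\}$. Integrating $\Delta u=-\lambda_1 u$ over $\Omega_s$ gives
\[
\lambda_1\int_a^s w\,dm \;=\; \lambda_1\int_{\Omega_s}u \;=\; -\int_{\partial\Omega_s}|\nabla u|.
\]
The Neumann condition kills the contribution from $\partial M$, and $\{u=w(s)\}$ is regular for a.e.\ $s$ by Sard. In particular $\int_a^s w\,dm<0$ for every $s\in(a,b)$, since the total integral of $u$ vanishes.

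Step 2 brings in the gradient comparison. By Theorem \ref{thm:gradient} and $J>1-\delta$, we have $|\nabla u|\le (1-\delta)^{-1/2}\,w'(\varphi)$. Then, by the co-area formula,
\[
\frac{d}{ds}m((a,s)) = \frac{d}{ds}\int_a^s\int_{\{u=w(r)\}}\frac{w'(r)}{|\nabla u|}\,d\mathcal H^{n-1}\,dr \;\ge\; \int_{\{\varphi=s\}}\frac{w'(s)}{|\nabla u|}.
\]
On the level set the integrand satisfies
\[
\frac{w'}{|\nabla u|}=\frac{(w')^2}{|\nabla u|\,w'}\ge \frac{(1-\delta)|\nabla u|}{w'}.
\]
Consequently the density of $m$ is at least $(1-\delta)\,w'(s)^{-1}\int_{\{\varphi=s\}}|\nabla u|$. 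Substituting Step 1, this becomes
\[
dm(s)\ \ge\ -\frac{(1-\delta)\lambda_1}{w'(s)}\Bigl(\int_a^s w\,dm\Bigr)ds .
\]
Multiplying by $w(s)$, which changes sign at $t_0$, yields the required one-sided inequality on each side of $t_0$. After multiplying by the integrating factor $e^h$, this says $E$ is monotone in the stated direction. A possible singular part of $m$ only adds mass to $dm$; multiplied by $w$, it has the same favourable sign on each side, so it does not affect the argument.

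The main obstacle is making the density estimate rigorous: $m$ may charge critical level sets, and $w'$ vanishes at the endpoints $a,b$. I would handle this by proving the monotonicity on compact subintervals $[s_1,s_2]\subset(a,b)$. There I would compare $E(s_2)$ and $E(s_1)$ via the integrated form of the inequality, using Lipschitz test functions $f(\varphi)$ in $\int_M \langle\nabla u,\nabla f(\varphi)\rangle=\lambda_1\int_M u f(\varphi)$ instead of pointwise level sets. This follows Bakry--Qian's argument with the factor $(1-\delta)$ coming from $J$.
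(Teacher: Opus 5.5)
Your proposal is correct and is essentially the paper's argument. Both derive the measure inequality $(1-\delta)^{-1}w'\,dm \ge -\lambda_1\bigl(\int_a^s w\,dm\bigr)\,ds$ on $(a,b)$ from the eigenvalue equation and the gradient comparison, multiply by $w/w'$ (whose sign changes at $t_0$), and apply the integrating factor. The only difference is that you disintegrate along level sets, using the divergence theorem on $\{u<w(s)\}$ and the co-area formula, whereas the paper integrates $\Delta u$ against $G(u)$ with $G'\ge 0$ and uses Fubini. That is precisely the test-function version you name as your rigorous fallback, and it avoids Sard, level-set regularity at $\partial M$, and the singular-part bookkeeping.
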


\begin{proof}
Choose a smooth nonnegative function $H(s)$ with compact support in $(a, b)$. Define $G: [-1, w(b)] \to \mathbb{R}$ by
\[
\frac{d}{dt}[G(w(t))] = H(t), \quad G(-1) = 0.
\]
Choose $F$ satisfying $F(t) + tF'(t) = G(t)$. Then
\[
\Delta(uF(u)) = G(u)\Delta u + |\nabla u|^2 G'(u).
\]
Since $\int_M \Delta(uF(u))\,dV = 0$, we have
\[
\int_M (\Delta u)G(u)\,dV = -\int_M G'(u)|\nabla u|^2\,dV.
\]
Applying the gradient comparison \eqref{eq:gradient-comparison} and the eigenvalue equation, we get
\[
\lambda_1\int_M uG(u)\,dV \le (1-\delta)^{-1}\int_M G'(u)(w' \circ w^{-1})^2(u)\,dV.
\]
By the definition of $m$ and since $w(\varphi) = u$, we obtain
\[
\lambda_1\int_a^b w(s)G(w(s))\,dm(s) \le (1-\delta)^{-1}\int_a^b G'(w(s))(w'(s))^2\,dm(s).
\]
Since $G'(w(s))(w'(s))^2 = H(s)w'(s)$ and $G(w(s)) = \int_a^s H(r)\,dr$, we have
\[
\int_a^b \left(\int_s^b \lambda_1 w(r)m(dr)\right)H(s)\,ds \le \int_a^b (1-\delta)^{-1}H(s)w'(s)m(ds).
\]
Using $\int_M u\,dV = 0$, which follows from the fact that $u$ is orthogonal to the constant eigenfunction in the Neumann problem, we have $\int_a^b w\,dm = 0$, so $\int_s^b w\,dm = -\int_a^s w\,dm$. Let $A(s) := -\lambda_1\int_a^s w\,dm$. Then
\[
(1-\delta)^{-1}w'\,dm - A\,ds \ge 0.
\]
On $(a, t_0]$, since $w \le 0$ and $w' \ge 0$, this implies
\[
dA + (1-\delta)\lambda_1\frac{w}{w'}A\,ds \ge 0.
\]
Hence $E(s) = A(s)\exp\left((1-\delta)\lambda_1\int_{t_0}^s \frac{w}{w'}\,dr\right)$ is increasing on $(a, t_0]$ and decreasing on $[t_0, b)$.
\end{proof}

\begin{proposition}\label{prop:integral}
Let $q := (1-\delta)\lambda_1/\overline{\lambda}$. For $\varepsilon > 0$ sufficiently small, there exists a constant $C > 0$ independent of $\varepsilon$ such that
\[
\int_{\{u \le -1+\varepsilon\}} (-u)\,dV \le C\left(\int_{\{w \le -1+\varepsilon\}} (-w)\,d\nu\right)^q,
\]
where $d\nu(t) := \mu_{\overline{n},\overline{K}}(t)dt$ and $\mu_{\overline{n},\overline{K}}$ is the integrating factor satisfying $(\mu_{\overline{n},\overline{K}}w')' = -\overline{\lambda}\mu_{\overline{n},\overline{K}}w$.
\end{proposition}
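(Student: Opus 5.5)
Assuming, for contradiction, that $u^* < m_{\overline{n},\overline{K}}$, the plan is to integrate the monotonicity of Proposition \ref{prop:monotone} near the left endpoint $a$, in the spirit of Bakry--Qian and the integral curvature version in \cite{RamosSetoTuerkoen}. Since $w$ is increasing on $[a,b]$ with $w(a)=-1$, the sets satisfy $\{u\le -1+\varepsilon\}=\varphi^{-1}([a,s_\varepsilon])$, where $s_\varepsilon := w^{-1}(-1+\varepsilon)$. Hence
\[
\int_{\{u\le -1+\varepsilon\}}(-u)\,dV=-\int_a^{s_\varepsilon} w\,dm,
\]
which is exactly the integral appearing in $E(s_\varepsilon)$.

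\textbf{Step 1.} We first rewrite the exponential factor in $E$ using the model. Write $w''-\overline{T}w'+\overline{\lambda}w=0$ in the divergence form $(\mu w')'=-\overline{\lambda}\mu w$, with $\mu=\mu_{\overline{n},\overline{K}}$ and $\mu'/\mu=-\overline{T}$. Then
\[
\overline{\lambda}\frac{w}{w'}=-\frac{(\mu w')'}{\mu w'}=-\frac{d}{dt}\log(\mu w').
\]
Consequently
\[
\exp\Bigl((1-\delta)\lambda_1\int_{t_0}^s\frac{w}{w'}\,dt\Bigr)=\Bigl(\frac{\mu(t_0)w'(t_0)}{\mu(s)w'(s)}\Bigr)^{q}.
\]
Moreover $\mu(s)w'(s)=-\overline{\lambda}\int_a^s w\,d\nu$, because $\mu w'$ vanishes at $a$ (for $\overline{K}>0$ the weight $\mu\sim\cos^{\overline{n}-1}$ vanishes at $a=-\pi/(2\sqrt{\overline{K}})$, and for $\overline{K}<0$ the weight $\mu\sim\sinh^{\overline{n}-1}$ vanishes at $a=0$). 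Together these give
\[
E(s)=c_0\Bigl(-\overline{\lambda}\int_a^s w\,d\nu\Bigr)^{-q}\Bigl(-\int_a^s w\,dm\Bigr),\qquad c_0:=(\mu(t_0)w'(t_0))^{q}.
\]

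\textbf{Step 2.} Since $E$ is increasing on $(a,t_0]$, we get $E(s_\varepsilon)\le E(t_0)$ for $\varepsilon$ small enough that $s_\varepsilon\le t_0$. Rearranging,
\[
-\int_a^{s_\varepsilon} w\,dm\le \frac{E(t_0)}{c_0}\,\overline{\lambda}^{\,q}\Bigl(-\int_a^{s_\varepsilon}w\,d\nu\Bigr)^{q}.
\]
The last integral equals $\int_{\{w\le-1+\varepsilon\}}(-w)\,d\nu$, which is the claimed inequality with $C:=\overline{\lambda}^{\,q}E(t_0)/c_0$.

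\textbf{Step 3.} It remains to check that $C$ is independent of $\varepsilon$. We have $E(t_0)=-\int_a^{t_0}w\,dm\le\int_M|u|\,dV\le \vol(M)$, while $c_0$ and $\overline{\lambda}$ depend only on the model data. One must still confirm that $q$ and $\overline{\lambda}$ stay in a fixed range: $\lambda_1$ is bounded above and below in terms of $n,D,R_E$ through the Aubry estimate and the choice of $\overline{K}$, and $q<1$ since $\overline{\lambda}=(1+2\delta)\lambda_1$.

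\textbf{Main obstacle.} The delicate point is Step 1. We need that $w'>0$ on $(a,t_0]$ so the integral $\int w/w'$ makes sense. We also need that the behaviour at the singular endpoint $a$ gives $\mu w'\to0$, so that no boundary term appears, since $\overline{T}$ blows up there. I would verify this from the explicit asymptotics of $w$ near $a$, namely $w+1\sim c(t-a)^2$ and $\mu\sim (t-a)^{\overline{n}-1}$. I would also check that the integrability of $w/w'$ near $a$ is not needed, because the comparison in Step 2 is only made between $s_\varepsilon$ and $t_0$.
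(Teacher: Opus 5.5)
Your argument is correct and is essentially the paper's proof. With $A(s)=\int_a^s(-w)\,dm$ and $B(s)=\int_a^s(-w)\,d\nu$, the identity $\mu w'=\overline{\lambda}B$ turns the exponential factor in $E$ into $(B(t_0)/B(s))^{q}$, and $E(s)\le E(t_0)=A(t_0)$ for $s\le t_0$ gives $A(s)\le A(t_0)B(t_0)^{-q}B(s)^{q}$ at $s=w^{-1}(-1+\varepsilon)$; the uniform-range discussion in your Step 3 is unnecessary, since $C=A(t_0)/B(t_0)^{q}$ is visibly independent of the level $\varepsilon$.
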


\begin{proof}
Let $A(s) := \int_a^s (-w)\,dm = \int_{\{u \le w(s)\}}(-u)\,dV$ and $B(s) := \int_a^s (-w)\mu_{\overline{n},\overline{K}}\,dt$. From the eigenvalue equation $(\mu_{\overline{n},\overline{K}}w')' = -\overline{\lambda}\mu_{\overline{n},\overline{K}}w$, integrating from $a$ to $s$ gives
\[
\mu_{\overline{n},\overline{K}}(s)w'(s) = \overline{\lambda}B(s).
\]
Taking the logarithmic derivative and integrating from $s$ to $t_0$, we get
\[
\exp\left(\int_{t_0}^s \frac{w}{w'}\,dt\right) = \left(\frac{B(t_0)}{B(s)}\right)^{1/\overline{\lambda}}.
\]
By Proposition \ref{prop:monotone}, $E(s) \le E(t_0) = A(t_0)$ for $s \le t_0$. Substituting the expression for the exponential factor gives
\[
A(s)\left(\frac{B(t_0)}{B(s)}\right)^{(1-\delta)\lambda_1/\overline{\lambda}} \le A(t_0),
\]
which yields $A(s) \le CB(s)^q$. Evaluating at $s = w^{-1}(-1+\varepsilon)$ completes the proof.
\end{proof}

\begin{lemma}\label{lem:ball}
The preimage $u^{-1}([-1, -1+\varepsilon))$ contains a ball of radius
\[
r_\varepsilon = \sqrt{1-\delta}\left(w^{-1}(-1+\varepsilon) - a\right).
\]
\end{lemma}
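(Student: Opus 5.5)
The ball will be centred at a point $x_0$ where $u(x_0)=-1$, and the radius comes from integrating the gradient comparison of Theorem~\ref{thm:gradient} along curves. The key reduction is to pass to $\varphi := w^{-1}\circ u$, which takes values in $[a,b]$ and satisfies $\varphi(x_0)=a$. By the chain rule, $|\nabla \varphi| = |\nabla u|/(w'\circ\varphi)$ wherever $w'(\varphi)>0$. The comparison \eqref{eq:gradient-comparison}, combined with $J>1-\delta$ from Lemma~\ref{lem:J}, gives
\[
|\nabla u|^2 \le (1-\delta)^{-1} (w')^2\circ \varphi,
\qquad\text{hence}\qquad
|\nabla \varphi| \le (1-\delta)^{-1/2}.
\]
So $\varphi$ is Lipschitz with constant $(1-\delta)^{-1/2}$.

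For the ball itself, take any $y$ with $d(x_0,y)<r_\varepsilon$. Convexity of $\partial M$ (Lemma~\ref{lem:convex-geodesics}, or the Neumann setting) ensures that a minimizing geodesic $\gamma$ from $x_0$ to $y$ stays in $M$. Integrating along $\gamma$ gives
\[
\varphi(y) \le a + (1-\delta)^{-1/2} d(x_0,y).
\]
This should be read as $d(x_0,y) < \sqrt{1-\delta}\,(w^{-1}(-1+\varepsilon)-a)$ implies $\varphi(y) < w^{-1}(-1+\varepsilon)$. Since $w$ is increasing on $[a,b]$, it follows that $u(y) < -1+\varepsilon$.

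The stated constant is $\sqrt{1-\delta}$, while the naive bound above produces $(1-\delta)^{-1/2}$ on the Lipschitz side. This matches the claimed radius exactly: the radius $r_\varepsilon = \sqrt{1-\delta}\,(w^{-1}(-1+\varepsilon)-a)$ is precisely $(w^{-1}(-1+\varepsilon)-a)$ divided by the Lipschitz constant $(1-\delta)^{-1/2}$.

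The main obstacle is the set where $w'(\varphi)=0$, namely the endpoints $\varphi=a$ (and $b$). There the chain rule for $\varphi$ is singular, since $w^{-1}$ is not Lipschitz at $-1$. I would handle this by working with $w^{-1}$ restricted to $(-1, w(b))$ and applying the gradient bound only on the open set $\{u>-1\}$. Along $\gamma$, let $s_0$ be the last time with $u(\gamma(s))=-1$. On $(s_0, s]$ the function $\varphi\circ\gamma$ is locally Lipschitz with derivative bounded by $(1-\delta)^{-1/2}$ and is continuous at $s_0$ with value $a$. The fundamental theorem of calculus then applies on each compact subinterval, and letting the left endpoint tend to $s_0$ gives the bound. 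Alternatively, one can argue with $\varphi_\eta := w^{-1}\circ \max(u,-1+\eta)$ and let $\eta\to0$.

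The inclusion into $[-1,-1+\varepsilon)$ also needs $u\ge-1$, which holds by the normalization. The hypothesis $[-1,u^*]\subset[-1,w(b)]$ of Theorem~\ref{thm:gradient}, inherited here, guarantees that $\varphi$ is defined everywhere on $M$.
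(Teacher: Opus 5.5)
Your proposal is correct and follows the paper's proof. Both centre the ball at a minimum point $x_0$, use \eqref{eq:gradient-comparison} together with $J\ge 1-\delta$ to obtain $\frac{d}{dt}\,w^{-1}(u\circ\gamma)\le (1-\delta)^{-1/2}$ along a unit-speed path from $x_0$, and integrate. Your handling of the singularity of $w^{-1}$ at $-1$ (where $w'(a)=0$), via the last time $s_0$ at which $u\circ\gamma=-1$, carefully justifies a step the paper passes over; no geodesic or convexity property is really needed, since any curve in $M$ from $x_0$ to $y$ of length less than $r_\varepsilon$ would do.
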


\begin{proof}
Let $x_0$ be a minimum point of $u$, and let $\gamma$ be a unit speed minimizing geodesic from $x_0$. Set $f := u \circ \gamma$. By the gradient comparison,
\[
|f'| \le |\nabla u| \le \frac{w'(w^{-1}(f))}{\sqrt{J}} \le \frac{w'(w^{-1}(f))}{\sqrt{1-\delta}}.
\]
Hence $\frac{d}{dt}w^{-1}(f) \le 1/\sqrt{1-\delta}$, which implies
\[
a \le w^{-1}(f(t)) \le a + \frac{t}{\sqrt{1-\delta}}.
\]
Consequently, if $t < \sqrt{1-\delta}(w^{-1}(-1+\varepsilon) - a)$, then $u(\gamma(t)) < -1+\varepsilon$.
\end{proof}

\begin{proof}[Proof of Theorem \ref{thm:maxcomp}, Part 2: Conclusion]
By Proposition \ref{prop:integral} and Lemma \ref{lem:ball}, for $\tau \in (a, t_0)$ sufficiently close to $a$, with $h_\tau := w(\tau) = -1+\varepsilon_\tau$, we have
\[
\vol(\{u \le h_\tau\}) \le 2C'\nu([a, \tau])^q = 2C'\left(\int_a^\tau \mu_{\overline{n},\overline{K}}(t)\,dt\right)^q.
\]
Near $t = a$, the density $\mu_{\overline{n},\overline{K}}(t)$ behaves like $(t-a)^{\overline{n}-1}$, so the integral is bounded by $C''(\tau-a)^{\overline{n}}$. Hence
\[
\vol(B(x_0, r_\tau)) \le \tilde{C} r_\tau^{\overline{n}q}.
\]
We now show that $\overline{n}q > n$ for $\delta$ sufficiently small. With $\overline{\lambda} = (1+2\delta)\lambda_1$ and $\delta_1 = \delta/(1+2\delta)$, we have $\delta = \delta_1/(1-2\delta_1)$ and hence
\[
q = \frac{1-\delta}{1+2\delta} = 1 - 3\delta_1.
\]
Since $\overline{n} = (n - \eta_0\delta_1)/(1 - \eta_0\delta_1)$, we obtain
\[
\overline{n}q = \frac{(n - \eta_0\delta_1)(1 - 3\delta_1)}{1 - \eta_0\delta_1} = \frac{n - (\eta_0 + 3n)\delta_1 + 3\eta_0\delta_1^2}{1 - \eta_0\delta_1}.
\]
The condition $\overline{n}q > n$ is equivalent to
\[
n - (\eta_0 + 3n)\delta_1 + 3\eta_0\delta_1^2 > n(1 - \eta_0\delta_1) = n - n\eta_0\delta_1,
\]
which simplifies to
\[
\eta_0(n-1)\delta_1 - 3n\delta_1 + 3\eta_0\delta_1^2 > 0,
\]
or, after dividing by $\delta_1 > 0$,
\[
\eta_0(n-1) - 3n + 3\eta_0\delta_1 > 0.
\]
Since $\eta_0(n-1) = (n+1)^2$, we have $(n+1)^2 - 3n = n^2 - n + 1 \ge 3 > 0$ for $n \ge 2$. Hence $\overline{n}q > n$ holds strictly for all sufficiently small $\delta > 0$.

The inequality $\vol(B(x_0, r_\tau)) \le \tilde{C} r_\tau^{\overline{n}q}$ with $\overline{n}q > n$ contradicts the fact that $\vol(B(x_0, r)) \ge c r^n$ for small $r$, where $c > 0$ is a dimensional constant. Therefore $u^* \ge m_{\overline{n},\overline{K}}$.
\end{proof}

\subsection{Maximum matching}

\begin{theorem}\label{thm:maxmatch}
Given $\delta > 0$, there exist $\varepsilon > 0$, $\overline{K}$, and $\overline{n}$ (as in Theorem \ref{thm:gradient}) such that whenever $\kappa_{\mathrm{Ein}}(p) < \varepsilon$, there exist $T$ satisfying the Riccati equation \eqref{eq:riccati}, an interval $I = [\overline{a}, b(\overline{a})]$, and a corresponding Neumann eigenfunction $w = w^{\overline{\lambda}}_{T,\overline{a}}$ on $I$ such that
\[
u^* = \max_I w, \quad -1 = \min_M u = \min_I w.
\]
\end{theorem}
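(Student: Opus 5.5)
The approach is the Bakry--Qian / Ramos Oliv\'e--Seto--Tuerkoen continuity argument in the starting point $a$ of the initial value problem \eqref{eq:ivp}. We fix $\delta>0$ and take $\varepsilon$, $\overline{n}$, $\overline{K}$, $\overline{\lambda}=(1+2\delta)\lambda_1$ as in Theorem \ref{thm:gradient} and Theorem \ref{thm:maxcomp}. We then work with the one-parameter family of Riccati solutions obtained by translating the model ones: $T_a(t)$ is a shift of $\overline{T}_{\overline{n},\overline{K}}$ or of the explicit solution of \eqref{eq:riccati} with the opposite endpoint behaviour. For $\overline{K}>0$ we use $T(t)=(\overline{n}-1)\sqrt{\overline{K}}\tan(\sqrt{\overline{K}}t)$ with starting points $a\in[-\pi/(2\sqrt{\overline{K}}),0]$. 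For $\overline{K}<0$ we use the family consisting of $-(\overline{n}-1)\sqrt{-\overline{K}}\coth$, shifted $\tanh$ profiles and $-(\overline{n}-1)\sqrt{-\overline{K}}\tanh$, with starting point $a\in[0,\infty)$ or its analogue. For each admissible $a$ let $w_a$ solve \eqref{eq:ivp} with $w_a(a)=-1$, $w_a'(a)=0$, let $b(a)=a+d(a,T,\overline{\lambda})$, and set $m(a):=w_a(b(a))=\max_{[a,b(a)]}w_a$. Since $w_a'\ge0$ on $[a,b(a)]$, the minimum of $w_a$ is automatically $-1=\min_M u$. The whole task is therefore to find $\overline{a}$ with $m(\overline{a})=u^*$.

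The steps are as follows.

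\textbf{Step 1 (continuity).} Show that $a\mapsto b(a)$ and $a\mapsto m(a)$ are continuous on the admissible range. The key is that $w_a''(b(a))=-\overline{\lambda}w_a(b(a))<0$ whenever $m(a)>0$, so $b(a)$ is a simple zero of $w_a'$. Continuous dependence of ODE solutions on parameters plus the implicit function theorem then give continuity. At the singular endpoint $a=-\pi/(2\sqrt{\overline{K}})$, or at the $\coth$ endpoint, continuity needs the standard argument for regular-singular initial points, which we take from \cite{bakry2000some}.

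\textbf{Step 2 (lower endpoint).} At the singular endpoint, $m(a)=m_{\overline{n},\overline{K}}$. Theorem \ref{thm:maxcomp} gives $u^*\ge m_{\overline{n},\overline{K}}$.

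\textbf{Step 3 (upper endpoint).} At the symmetric endpoint ($a=-b$, i.e.\ $T$ odd about the midpoint) the solution is odd about the centre, so $m=1$. The normalization gives $u^*\le1$.

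By the intermediate value theorem there is then $\overline{a}$ with $m(\overline{a})=u^*$. If $u^*=1$ or $u^*=m_{\overline{n},\overline{K}}$ we simply take the corresponding endpoint. We must also check that the hypothesis $[-1,u^*]\subset[-1,w(b)]$ of Theorem \ref{thm:gradient} is consistent along the family, which holds since $m(a)\ge m_{\overline{n},\overline{K}}$. Finally we confirm that $\overline{\lambda}$ is a Neumann eigenvalue of $\mathcal{L}_T$ on $I=[\overline{a},b(\overline{a})]$; this is immediate since $w'$ vanishes at both ends.

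\textbf{Main obstacle.} The delicate point is Step 1 near the singular endpoint, together with the requirement that $d(a,T,\overline{\lambda})<\infty$ for every $a$ in the family. For $\overline{K}>0$ this is where the choice of $\overline{K}$ using Aubry's bound matters: it gives $\overline{\lambda}>\overline{n}\,\overline{K}$, so $w_a$ oscillates and $w_a'$ has a first positive zero before $T$ blows up. I would first prove a uniform Sturm comparison on the family to bound $b(a)$ away from the blow-up point, and then apply continuous dependence. For $\overline{K}\le0$, finiteness of $d$ should follow from $\overline{\lambda}>0$ by a similar Sturm comparison with the flat case.
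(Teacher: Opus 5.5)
\textbf{Case $\overline K>0$ matches the paper.} Here your plan is the paper's argument. You vary the starting point of the translated $\tan$ family, running from the singular start to the odd solution on a symmetric interval, where the maximum is $1$. At the singular start, Theorem~\ref{thm:maxcomp} gives $u^*\ge m_{\overline n,\overline K}$. Aubry's bound supplies $\overline\lambda>\overline n\,\overline K$, and continuity at the singular endpoint is taken from Bakry--Qian.

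Two small corrections:
\begin{itemize}
\item The range of starting points is $[-\pi/(2\sqrt{\overline K}),a_s]$ with $b(a_s)=-a_s$, not $[-\pi/(2\sqrt{\overline K}),0]$. For $a$ near $0$ the first zero of $w_a'$ need not occur before $T$ blows up.
\item The condition $[-1,u^*]\subset[-1,w(b)]$ is not needed for this theorem. It also would not follow from $m(a)\ge m_{\overline n,\overline K}$.
\end{itemize}

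\textbf{The gap is the case $\overline K<0$.} You assert that $d(a,T,\overline\lambda)<\infty$ along the family follows from $\overline\lambda>0$ by Sturm comparison with the flat case. This is false, because the drift is not a small perturbation.

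Consider the constant Riccati solution $T\equiv-(\overline n-1)\sqrt{-\overline K}$, which is the common limit of your translated $\coth$ and $\tanh$ profiles. The equation $w''+(\overline n-1)\sqrt{-\overline K}\,w'+\overline\lambda w=0$ has real negative characteristic roots whenever $\overline\lambda\le-(\overline n-1)^2\overline K/4$. The solution with $w(a)=-1$, $w'(a)=0$ then satisfies $w'>0$ for all $t>a$.

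The same happens for the $\coth$ model started at its singularity. This is the radial model with weight $\sinh^{\overline n-1}(\sqrt{-\overline K}\,t)$, whose bottom of spectrum is $-(\overline n-1)^2\overline K/4$. For $\overline\lambda$ at or below this value, $w$ increases on all of $(0,\infty)$. So the endpoint of your Step 2 does not exist, and $m_{\overline n,\overline K}$ is not attained at any finite $b$.

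Thus the single continuous path from the $\coth$ endpoint to the symmetric $\tanh$ endpoint breaks. This is exactly why the paper, following Naber--Valtorta (Propositions 28 and 32), treats $\overline\lambda\le-(\overline n-1)^2\overline K/4$ and $\overline\lambda>-(\overline n-1)^2\overline K/4$ separately. Your path can only work in the second subcase.

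\textbf{A simpler fix in this setting.} You can avoid the negative case altogether. Since $R_E>0$, we have $K_E>0$, and $0\le\sigma\le4\varepsilon$. Take $\delta<1$ and
\[
\varepsilon<\min\{(1-\delta)(n-1)K_E/(4(1+\delta)),\,1/C(n,p)\}.
\]
Then both expressions entering the definition of $\overline K$ are positive. The statement permits shrinking $\varepsilon$ in this way, which makes the case $\overline K<0$ vacuous, and your $\overline K>0$ argument then proves the theorem.
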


\begin{proof}
\sloppy
The proof follows \cite{RamosSetoTuerkoen} (Theorem 4.5) with $\rho_K$ replaced by $\Theta$. We divide into cases $\overline{K} > 0$ and $\overline{K} < 0$.

\textbf{Case 1: $\overline{K} > 0$.} By our choice of $\overline{K}$ and Aubry's estimate \cite{aubry2007finiteness}, we have
\[
\overline{\lambda} > \overline{n}\overline{K} = \lambda_1\left(T_{\overline{n},\overline{K}}, \left[-\frac{\pi}{2\sqrt{\overline{K}}}, \frac{\pi}{2\sqrt{\overline{K}}}\right]\right).
\]
If $u^* = 1$, we can find $-\pi/(2\sqrt{\overline{K}}) < \overline{a} < 0$ such that the Neumann eigenvalue of $\mathcal{L}_{T_{\overline{n},\overline{K}}}$ on $[\overline{a}, -\overline{a}]$ equals $\overline{\lambda}$, and the corresponding eigenfunction $w$ is odd with $\max w = 1$.

If $u^* < 1$, by Theorem \ref{thm:maxcomp}, $u^* \ge m_{\overline{n},\overline{K}}$. By the continuity of the maximum with respect to the starting point (see \cite{bakry2000some}, Section 3, Corollary 1), there exists $\overline{a}$ such that the corresponding Neumann eigenfunction on $[\overline{a}, b(\overline{a})]$ satisfies the matching condition.

\textbf{Case 2: $\overline{K} < 0$.} This follows from \cite{naber2014sharp} (Propositions 28 and 32) as in \cite{RamosSetoTuerkoen} (Theorem 4.5). The two subcases $\overline{\lambda} \le -(\overline{n}-1)^2\overline{K}/4$ and $\overline{\lambda} > -(\overline{n}-1)^2\overline{K}/4$ are treated separately, corresponding to whether the model potential is above or below the critical threshold.
\end{proof}

\section{Proof of Theorem \ref{thm:main}}\label{sec:proof}

In this section, we complete the proof of Theorem \ref{thm:main} by means of a diameter comparison. The argument combines the gradient comparison of Section \ref{sec:gradient} with the maximum matching of Section \ref{sec:max}, and concludes by comparing the eigenvalue of the manifold with that of the one-dimensional model.

Figure~\ref{fig:diameter} illustrates the diameter comparison argument. A minimizing geodesic $\gamma$ connects the minimum point $x_0$ and the maximum point $y_0$ of the eigenfunction $u$. Along this geodesic, the change of $u$ is controlled by the gradient comparison, and integrating over the interval $[-1, u^*]$ yields a lower bound for the diameter $D$ in terms of the length of the one-dimensional model interval.

\begin{figure}[ht]
\centering
\begin{tikzpicture}[scale=1.0]
  \draw[thick, blue!60!black, fill=blue!5] plot[smooth cycle, tension=0.8] coordinates {(-3.2,0.2) (-2.4,1.5) (0,1.9) (2.5,1.3) (3.3,0.1) (2.3,-1.4) (0,-1.8) (-2.3,-1.3)};
  \node[blue!60!black, font=\small] at (-2.5,1.4) {$M$};
  \draw[thick, red!70!black] (-2.4,-0.8) .. controls (-1.0,0.2) and (1.0,0.2) .. (2.4,-0.8);
  \fill[red] (-2.4,-0.8) circle (3pt);
  \fill[red] (2.4,-0.8) circle (3pt);
  \node[red, below left, font=\small] at (-2.4,-0.85) {$x_0$};
  \node[red, below right, font=\small] at (2.4,-0.85) {$y_0$};
  \node[red!70!black, font=\small] at (0,-1.4) {$\gamma$ minimizing geodesic};
  \draw[gray, densely dashed] (0,0.2) ellipse (0.5 and 0.35);
  \draw[gray, densely dashed] (0,0.2) ellipse (1.0 and 0.7);
  \draw[gray, densely dashed] (0,0.2) ellipse (1.7 and 1.2);
  \draw[gray, densely dashed] (0,0.2) ellipse (2.4 and 1.7);
  \draw[decorate, decoration={brace, amplitude=6pt, mirror}, thick, gray] (-2.4,-2.1) -- (2.4,-2.1);
  \node[gray, below, font=\small] at (0,-2.3) {$D \ge \diam(M) \ge \sqrt{1-\delta}\,d(T,a,\overline{\lambda})$};
\end{tikzpicture}
\caption{The diameter comparison. A minimizing geodesic $\gamma$ connects the minimum point $x_0$ and the maximum point $y_0$ of the eigenfunction $u$. The level curves of $u$ are shown as dashed curves. Along $\gamma$, the gradient comparison controls the change of $u$, and integrating over $[-1, u^*]$ yields the lower bound $D \ge \sqrt{1-\delta}\,d(T,a,\overline{\lambda})$ in terms of the one-dimensional model. This is the final step in the proof of Theorem \ref{thm:main}.}
\label{fig:diameter}
\end{figure}

\begin{proof}[Proof of Theorem \ref{thm:main}]
Let $u$ denote the first nontrivial eigenfunction on $M$ and $\lambda_1$ the corresponding eigenvalue. We assume that $u$ is scaled such that $-1 = \min_M u < \max_M u = u^* \le 1$. By Theorem \ref{thm:maxmatch}, we can find an interval $[a, b]$, a function $T = T_{\overline{n},\overline{K}}$ such that the corresponding Neumann eigenfunction $w$ on $[a, b]$ satisfies
\[
\max_M u = \max_{[a,b(a)]} w.
\]
Now consider a normalized, minimizing geodesic $\gamma: [0, l] \to M$ connecting the minimum point $x_0$ and the maximum point $y_0$. Let $f(t) := u(\gamma(t))$ and choose $I \subset [0, l]$ such that $I \subset (f')^{-1}(0, \infty)$ and $f^{-1}$ is well-defined on a subset of full measure of $[-1, u^*]$. Then, by the change of variables formula and the gradient comparison,
\[
D \ge \diam(M) \ge \int_0^l dt \ge \int_I dt = \int_{-1}^{u^*} \frac{dy}{f'(f^{-1}(y))} \ge \int_{-1}^{u^*} \frac{\sqrt{1-\delta}\,dy}{w'(w^{-1}(y))} = \sqrt{1-\delta}\,d(T, a, \overline{\lambda}).
\]
Here the factor $\sqrt{1-\delta}$ arises from the gradient comparison \eqref{eq:gradient-comparison} and the lower bound $J \ge 1-\delta$; since $0 < \sqrt{1-\delta} < 1$, this yields a slightly weaker constant than in the pointwise curvature case, but the argument remains valid.
By Proposition \ref{prop:central}, $d(T, a, \overline{\lambda}) \ge d_{\overline{n},\overline{K},\overline{\lambda}}$. Hence $D \ge \sqrt{1-\delta}\,d_{\overline{n},\overline{K},\overline{\lambda}}$.
By the domain monotonicity \eqref{eq:domain-monotonicity}, $d_{\overline{n},\overline{K},\cdot}$ is decreasing in $\overline{\lambda}$. Hence there exists $C_1 = C_1(\delta) > 1$ with $C_1 \to 1$ as $\delta \to 0^+$ such that
\[
\sqrt{1-\delta}\,d_{\overline{n},\overline{K},\overline{\lambda}} = d_{\overline{n},\overline{K},C_1\overline{\lambda}}.
\]
Since $\overline{n} \to n$ and $\overline{K} \to K_E$ as $\delta \to 0^+$, there exists $C_2 = C_2(\delta) > 0$ with $C_2 \to 1$ such that
\[
d_{\overline{n},\overline{K},C_1\overline{\lambda}} = d_{n,K_E,C_1C_2\overline{\lambda}}.
\]
Since $d_{n,K_E,\cdot}$ is strictly decreasing and continuous, we conclude
\[
C_1C_2\overline{\lambda} \ge \lambda_1(n, K_E, D).
\]
In other words, since $\overline{\lambda} = (1+2\delta)\lambda_1$, for any $\alpha \in (0,1)$, there exists $\delta > 0$ such that
\[
\lambda_1 \ge \alpha \, \lambda_1(n, K_E, D) = \alpha \, \lambda_1\!\left(n, \frac{R_E}{n(n-1)}, D\right).
\]
Finally, note that if $\delta = 0$, i.e., if $\kappa_{\mathrm{Ein}}(p) = 0$, we recover the estimate $\lambda_1 \ge \lambda_1(n, K_E, D)$. This completes the proof.
\end{proof}

\begin{remark}
The proof shows that the constant $\varepsilon_0$ depends on $n$, $p$, $\alpha$, $D$, and $R_E$. The dependence on $R_E$ enters through the choice of $\overline{K}$ and through the Aubry estimate \cite{aubry2007finiteness}. In the case $R_E \le 0$, the argument requires minor modifications, since the model problem \eqref{eq:model} then has a different asymptotic behavior; we refer to \cite{RamosSetoTuerkoen} (Section 4) for the details.
\end{remark}

\appendix
\section{Global Neumann Sobolev Inequality under Integral Einstein Deviation Bounds}\label{app:sobolev}

In this appendix, we prove the global Neumann Sobolev inequality and Neumann Poincar\'e inequality under integral Einstein deviation bounds, which are needed for the Moser iteration in Lemma \ref{lem:J}. The argument follows \cite{RamosSetoTuerkoen} (Appendix A) with $\rho_K$ replaced by $\Theta$. The main point requiring verification is that the pointwise Jacobian comparison and volume comparison estimates, originally stated under integral Ricci curvature bounds, remain valid under integral Einstein deviation bounds. This is a consequence of the fact that the proofs of these estimates depend only on the $L^p$-norm of the curvature deficit, and the traceless Ricci tensor together with the scalar curvature deviation plays the same role as the Ricci curvature deficit in these arguments.

We use the averaged $L^q$ norm
\[
\|u\|_q^* := \left(\frac{1}{\vol(M)}\int_M |u|^q\,dv\right)^{1/q},
\]
defined for any $q > 0$ and any measurable function $u$ on $M$.

\begin{lemma}\label{lem:convex-geodesics}
Let $\intt(M)$ denote the interior of $M$. If $x, y \in \intt(M)$, then every minimizing geodesic joining $x$ and $y$ is contained in $\intt(M)$.
\end{lemma}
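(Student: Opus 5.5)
We argue by contradiction, using only local convexity of the boundary and the first variation of length. Let $x,y\in\intt(M)$ and let $\gamma:[0,\ell]\to M$ be a unit-speed minimizing curve realizing $d(x,y)$. In a manifold with boundary, such a curve exists by compactness, since $M$ is a compact length space. It is a priori only $C^1$, and it is a geodesic of $g$ on every open subinterval where it stays in $\intt(M)$. Suppose $\gamma$ touches $\partial M$. Because $\gamma(0),\gamma(\ell)\in\intt(M)$, the set $\{t:\gamma(t)\in\partial M\}$ is a nonempty compact subset of $(0,\ell)$. Let $t_1$ be its smallest element and set $q:=\gamma(t_1)\in\partial M$.

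\textbf{Step 1 (tangency).} On $[0,t_1)$ the curve is an interior geodesic arriving at $q$. Minimality gives tangency: if $\gamma'(t_1)$ had a nonzero inward normal component, then just after $t_1$ the curve would have to re-enter the interior. By uniqueness of geodesics in the interior, $\gamma$ would then be a geodesic through $q$ crossing $\partial M$ transversally, which forces it to leave $M$, a contradiction. So $\langle\gamma'(t_1),\nu\rangle=0$.

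\textbf{Step 2 (convexity).} Convexity of the $C^2$ boundary means the second fundamental form $\mathrm{II}(X,X)=\langle\nabla_X\nu,X\rangle$ is nonnegative. Work in Fermi coordinates near $q$, with $r$ the distance to $\partial M$ (positive in the interior). Then $r$ is $C^2$ and $\Hess r(X,X)=-\mathrm{II}(X,X)\le 0$ on tangent vectors at $\partial M$. Along the interior geodesic segment ending at $q$, the function $h(t):=r(\gamma(t))$ satisfies
\[
h''(t)=\Hess r(\gamma',\gamma'),\qquad h(t_1)=0,\qquad h'(t_1)=0 .
\]
Since $h''\le 0$ only near the boundary, this alone gives no contradiction: the geodesic is bending toward the boundary, which is allowed.

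\textbf{Step 3 (shortcut).} The contradiction therefore comes from a comparison with the tangent geodesic. Take the $g$-geodesic $\sigma$ through $q$ with $\sigma'(0)=\gamma'(t_1)$. By strict interior analysis, or by perturbing the metric to make $\partial M$ strictly convex and passing to a limit, $\sigma$ stays in $M$ for small times. Then $\gamma$ coincides with a geodesic near $t_1$. Indeed, minimizing curves in manifolds with convex boundary are known to be $C^{1,1}$ and to satisfy the geodesic equation wherever they lie on $\partial M$, because the normal force term $\mathrm{II}(\gamma',\gamma')\nu$ has the sign that pushes inward, and minimality forces it to vanish. So $\gamma$ is a genuine $g$-geodesic on a neighborhood of $t_1$. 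By ODE uniqueness from $t<t_1$, $\gamma$ agrees with the interior geodesic. A continuity argument then shows that the set of contact times is open and closed in the set of times where $\gamma$ is a geodesic, giving a geodesic of $M$ from $x$ to $y$.

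\textbf{Step 4 (excluding contact).} Finally, we exclude the contact itself. A geodesic touching a convex boundary tangentially from the inside at an isolated time, with interior endpoints, can be pushed inward by the variation field $\phi(t)\nu$, with $\phi\ge 0$ compactly supported near $t_1$. The second variation of length is then $-\int\phi^2\,\mathrm{II}(\gamma',\gamma')$ plus curvature terms. I expect this to be the main obstacle: under mere (non-strict) convexity, the argument must instead show that the curve can be moved into $\intt(M)$ without increasing length. I would first try approximating $M$ by the sublevel sets $\{r\ge s\}$, which are convex for small $s$, and using the strictly convex case to conclude. Failing that, I would cite the standard result (Bishop; Alexander--Berg--Bishop) that a domain with locally convex boundary is geodesically convex and that interior points are joined through the interior.
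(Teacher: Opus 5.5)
Your outline (first contact time $t_1$, tangency, convexity) matches the paper's. However, the proposal never reaches a contradiction. In Step~2 you discard the one computation that closes the argument. Steps~3--4 then make claims that are unjustified or false, and end by citing essentially the statement itself.

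The missing idea is a quantitative form of convexity in a collar of $\partial M$. You do not need $h''\le 0$ near $t_1$; you need
\[
h''(t)=\Hess r(\gamma',\gamma')\le C\,h(t)\qquad\text{for } t<t_1 \text{ close to } t_1,
\]
and this holds. Take a point $z$ with $r(z)=s$ and apply the Riccati equation to the shape operators of the parallel hypersurfaces along the normal geodesic, with initial value $-\mathrm{II}\le 0$ and bounded curvature. This gives $\Hess r_z(v,v)\le -\mathrm{II}(\tilde v,\tilde v)+Cs|v|^2\le Cs|v|^2$, where $\tilde v$ is the parallel-transported tangential part of $v$ (recall $\Hess r(\nabla r,\cdot)=0$). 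Since $\gamma|_{[0,t_1]}$ is a genuine geodesic, $h=r\circ\gamma$ is $C^2$ there. Taylor's formula at $t_1$, with $h(t_1)=h'(t_1)=0$, gives $h(t)\le\tfrac{C}{2}(t_1-t)^2\max_{[t,t_1]}h$. For $t_1-t$ small this forces $h\equiv 0$ on $[t,t_1]$, contradicting $h>0$ on $[0,t_1)$.

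This uses only the part of $\gamma$ before $t_1$. Beyond the $C^1$ regularity needed for tangency, no information about $\gamma$ on or after the contact set is required, and no shortening is needed. The paper compresses exactly this step into ``convexity lets one shorten $\gamma$''. When $\mathrm{II}$ is only semidefinite there is no evident strictly shorter competitor, and the rigidity argument above is what actually supplies the contradiction.

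The other steps have concrete defects.

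\textbf{Step~1.} The sign is reversed: arriving from the interior, $\gamma'(t_1)$ can only have an outward normal component. Also, uniqueness of geodesics says nothing at $q\in\partial M$, because after $t_1$ the curve need not continue the extended geodesic; a priori it could have a corner. Tangency should come instead from the $C^1$ regularity of shortest paths, which is a theorem of Alexander--Berg--Bishop, not an a priori fact. With it, $h\ge 0$ is $C^1$ with an interior minimum at $t_1$, so $h'(t_1)=0$.

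\textbf{Step~3.} The claim that ``$\sigma$ stays in $M$ for small times'' is false in general. If $\mathrm{II}(v,v)>0$, then $(r\circ\sigma)''(0)<0$ and $\sigma$ exits $M$ on both sides. Moreover, knowing that $\gamma$ is a geodesic through the contact set is not itself a contradiction; ruling out such a geodesic is precisely the content of the lemma.

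\textbf{Step~4.} The second variation gives no strict decrease when $\mathrm{II}$ is only semidefinite. Your proposed approximation also fails, because the sets $\{r\ge s\}$ need not be convex. Let $\mathrm{II}_s$ be the second fundamental form of $\{r=s\}$ and $v$ a null direction of $\mathrm{II}$. The Riccati equation gives $\tfrac{d}{ds}\mathrm{II}_s(v,v)=\mathrm{sec}(v,\nu)$ at $s=0$. This is negative for a compact hyperbolic surface with geodesic boundary, where the level sets of $r$ are hypercycles curving away from $\{r\ge s\}$.
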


\begin{proof}
Since $M$ is compact, it is complete as a metric space, hence any two points can be joined by a minimizing geodesic for the intrinsic distance. Let $\gamma: [0, \ell] \to M$ be such a minimizing geodesic joining $x$ and $y$. Assume by contradiction that $\gamma(t_0) \in \partial M$ for some $t_0 \in (0, \ell)$. Let $t_0$ be the first such time. Since $\gamma([0, t_0)) \subset \intt(M)$ and $\gamma$ is minimizing, the tangent vector $\gamma'(t_0)$ must be tangent to $\partial M$ at $\gamma(t_0)$. Convexity of $\partial M$ then implies that one can shorten the portion of $\gamma$ near $t_0$ by replacing it with a nearby curve lying in the interior of $M$, contradicting the minimizing property of $\gamma$.
\end{proof}

\begin{proposition}\label{prop:sobolev}
Let $p > n/2$. There exists $\varepsilon = \varepsilon(n, p, R_E, D) > 0$ such that if $\kappa_{\mathrm{Ein}}(p) \le \varepsilon$, then there exists $C_S = C_S(n, p, R_E, D)$ such that for every $u \in W^{1,2}(M)$, where $W^{1,2}(M)$ denotes the Sobolev space of functions on $M$ whose first derivatives are square-integrable,
\[
\|u - u_M\|_{\frac{2p}{p-1}}^* \le C_S\|\nabla u\|_2^*, \quad u_M := \frac{1}{\vol(M)}\int_M u\,dV.
\]
Here $u_M$ denotes the average of $u$ over $M$. Consequently,
\[
\|u\|_{\frac{2p}{p-1}}^* \le C_S\|\nabla u\|_2^* + \|u\|_2^*.
\]
\end{proposition}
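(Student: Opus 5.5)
The plan is to reduce to the integral-Ricci Sobolev inequality of \cite{RamosSetoTuerkoen} (Appendix A) by a pointwise comparison of curvature deficits. Let $\rho(x)$ be the smallest eigenvalue of $\Ric(x)$, and write $\Ric=(n-1)K_Eg+\theta g+\mathring{\Ric}$ as in Step C of the proof of Theorem \ref{thm:gradient}. The smallest eigenvalue of $\mathring{\Ric}$ is bounded below by $-|\mathring{\Ric}|$, so $\rho\ge (n-1)K_E-|\theta|-\eta=(n-1)K_E-\Theta$. Hence the Ricci deficit relative to $K_E$ satisfies $\rho_{K_E}=\max\{(n-1)K_E-\rho,0\}\le\Theta$ pointwise, and
\[
\kappa(p,K_E)\le\kappa_{\mathrm{Ein}}(p)\le\varepsilon .
\]
Since $R_E>0$ we have $K_E>0$, but nothing here depends on the sign of $K_E$. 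To be safe with respect to the hypotheses of the cited result, we may replace $K_E$ by $K=\min\{K_E,0\}$, which only decreases $\rho_K$. The Ricci lower bound $(n-1)K$ is then determined by $n$ and $R_E$, which explains the dependence of $\varepsilon$ and $C_S$ on $R_E$.

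The main step is the Neumann Sobolev inequality under small $\kappa(p,K)$ with diameter $\le D$ and convex boundary. I would invoke it directly from \cite{RamosSetoTuerkoen} (Appendix A), whose proof runs as follows:
\begin{itemize}
\item[(i)] Petersen--Wei type relative volume and Jacobian comparison along geodesics from a point, with error controlled by $\kappa(p,K)$ and $D$.
\item[(ii)] A segment inequality in the spirit of Cheeger--Colding.
\item[(iii)] A Neumann $L^1$-Poincar\'e or isoperimetric inequality obtained from (ii), followed by a Gallot-type argument upgrading it to the $L^{2p/(p-1)}$--$L^2$ Sobolev inequality for $u-u_M$.
\end{itemize}
Convexity enters only through Lemma \ref{lem:convex-geodesics}. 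Geodesics between interior points stay in $\intt(M)$, so the exponential-map/Jacobian arguments in (i)--(ii) are unaffected by the boundary, and integrating over interior pairs loses nothing because $\partial M$ has measure zero. The exponent $2p/(p-1)$ satisfies $2p/(p-1)\le 2n/(n-2)$ exactly when $p\ge n/2$. Thus it is an admissible Sobolev exponent under $p>n/2$, consistent with the Gallot-type scheme.

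The second inequality is immediate. By Minkowski for the averaged norm,
\[
\|u\|^*_{2p/(p-1)}\le\|u-u_M\|^*_{2p/(p-1)}+|u_M|,
\]
and by H\"older (or Jensen) for normalized measure, $|u_M|\le\|u\|_1^*\le\|u\|_2^*$.

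I expect the main obstacle to be verifying that the cited Sobolev constant depends only on $(n,p,K,D)$ when the boundary is nonempty, i.e.\ that the segment inequality still holds on a manifold with convex boundary. The comparison functions are defined only along interior geodesics, and the cut locus interacts with $\partial M$. My first attempt would be to check that Lemma \ref{lem:convex-geodesics} places every relevant pair of interior points inside the domain of the exponential map restricted to star-shaped subsets of $\intt(M)$. The volume-element estimates would then apply verbatim, exactly as in \cite{RamosSetoTuerkoen}.
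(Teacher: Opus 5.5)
Your proposal is correct and follows essentially the paper's route: both reduce to the Neumann Sobolev inequality of \cite{RamosSetoTuerkoen} (Appendix A). Your explicit pointwise bound $\rho_{K_E}\le\Theta$, hence $\kappa(p,K_E)\le\kappa_{\mathrm{Ein}}(p)$, is exactly what justifies the paper's instruction to ``replace $\rho_K$ by $\Theta$,'' and it lets you cite that result as a black box instead of re-running Petersen--Wei; the convex-boundary issue you flag is likewise taken from that appendix in the paper, so it is not an additional obstacle for your argument.
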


\begin{proof}
\sloppy
The proof follows \cite{RamosSetoTuerkoen} (Appendix A) verbatim. The only modification is that the pointwise Jacobian comparison and volume comparison estimates under integral Ricci curvature bounds (see \cite{petersen1997relative}) are replaced by their analogues under integral Einstein deviation bounds. Specifically, the relative volume comparison theorem of Petersen--Wei \cite{petersen1997relative} applies with $\rho_K$ replaced by $\Theta$, since the proof only uses the $L^p$ bound on the curvature deficit. The convexity of the boundary ensures that minimizing geodesics joining interior points remain in the interior (see Lemma \ref{lem:convex-geodesics}), and the Neumann boundary conditions eliminate boundary terms in the integration by parts. When $\partial M = \emptyset$, the argument simplifies, since the boundary terms are absent from the outset.
\end{proof}

\begin{corollary}\label{cor:poincare}
Let $p > n/2$. There exists $\varepsilon = \varepsilon(n, p, R_E, D) > 0$ such that if $\kappa_{\mathrm{Ein}}(p) \le \varepsilon$, then there exists $C_P = C_P(n, p, R_E, D) > 0$ such that for every $u \in W^{1,2}(M)$,
\[
\|u - u_M\|_2^* \le C_P\|\nabla u\|_2^*.
\]
Equivalently,
\[
\lambda_1(M) \ge C_P^{-2}.
\]
\end{corollary}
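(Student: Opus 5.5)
The Poincaré inequality follows from Proposition \ref{prop:sobolev} combined with Hölder's inequality on the normalized measure $dV/\vol(M)$. The plan has three steps.

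\textbf{Step 1 (Hölder).} Fix $\varepsilon$ and $C_S$ as in Proposition \ref{prop:sobolev}, and let $u\in W^{1,2}(M)$. The exponent $\frac{2p}{p-1}$ exceeds $2$ because $p>1$: indeed $p>n/2\ge 1$ since $n\ge 2$. The averaged norms are taken against a probability measure, so they are monotone in the exponent. Hence
\[
\|u-u_M\|_2^* \le \|u-u_M\|_{\frac{2p}{p-1}}^* \le C_S\|\nabla u\|_2^*.
\]
This gives the first claim with $C_P:=C_S$, so $C_P$ depends only on $(n,p,R_E,D)$, as required.

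\textbf{Step 2 (eigenvalue bound).} Apply Step 1 to a first nonzero eigenfunction $u$; in the boundary case it is a Neumann eigenfunction. Such a $u$ has $u_M=0$, because it is orthogonal to constants. Integrating by parts, with no boundary term thanks to the Neumann condition, gives $\int_M|\nabla u|^2\,dV=\lambda_1\int_M u^2\,dV$. Substituting into the inequality yields $\|u\|_2^*\le C_P\sqrt{\lambda_1}\,\|u\|_2^*$, hence $\lambda_1\ge C_P^{-2}$.

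\textbf{Step 3 (converse).} For the reverse implication I use the variational characterization $\lambda_1=\inf\{\int|\nabla u|^2/\int (u-u_M)^2\}$ over nonconstant $u\in W^{1,2}(M)$. This shows that $\lambda_1\ge C_P^{-2}$ implies the Poincaré inequality, so the two statements are equivalent.

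There is no real obstacle; all the analytic content sits in Proposition \ref{prop:sobolev}. The only points to check are that $p>1$, so that the Sobolev exponent dominates $2$, and that Neumann eigenfunctions have mean zero with no boundary contribution in the integration by parts.
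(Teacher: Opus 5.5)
Your proof is correct, but it takes a different route from the paper. The paper never uses Proposition~\ref{prop:sobolev} here. It appeals to the Neumann isoperimetric inequality, obtained from volume comparison, a local relative isoperimetric estimate and a Vitali covering. It then applies Cheeger's inequality $\lambda_1(M)\ge h^2/4$, where $h$ is the Neumann Cheeger constant, as in Corollary~A.12 of Ramos Oliv\'e--Seto--Tuerkoen; the Poincar\'e form is just the Rayleigh-quotient restatement of that eigenvalue bound.

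You instead notice that the mean-subtracted Sobolev inequality already stated in Proposition~\ref{prop:sobolev} contains the result. Since $p>n/2\ge 1$ gives $\frac{2p}{p-1}>2$, and normalized $L^q$ norms increase with $q$, you get the inequality with $C_P=C_S$ and the same $\varepsilon$. Your Steps~2 and~3 correctly settle the equivalence with $\lambda_1(M)\ge C_P^{-2}$: Neumann eigenfunctions have mean zero, the integration by parts has no boundary term, and the variational characterization gives the converse.

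Your argument is shorter and needs no new geometric input, but it makes the corollary depend on the full Sobolev estimate in its mean-subtracted form. The paper's argument uses only the more primitive isoperimetric inequality, so the corollary does not depend on Proposition~\ref{prop:sobolev}, and its constant is expressed through the Cheeger constant. In standard treatments both routes ultimately rest on the same Neumann isoperimetric estimate.
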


\begin{proof}
\sloppy
This follows from the Neumann isoperimetric inequality and Cheeger's inequality, as in \cite{RamosSetoTuerkoen} (Corollary A.12). The Neumann isoperimetric inequality is obtained by combining the volume comparison estimate, the local relative isoperimetric estimate, and the Vitali covering argument; the details are unchanged from \cite{RamosSetoTuerkoen}, with $\rho_K$ replaced by $\Theta$ throughout.
\end{proof}

\end{document}